\newtheorem{theorem}{Theorem}[section]
\newtheorem{lemma}[theorem]{Lemma}
\newtheorem{corollary}[theorem]{Corollary}
\newtheorem{example}[theorem]{Example}
\def\bi{\bigskip\noindent}
\def\si{\smallskip\noindent}
\def\pr{\text{P}}
\def\ex{\text{E}}
\def\la{\lambda}
\def\eps{\varepsilon}
\def\cal{\mathcal}
\date{\today}
\begin{document}

\title[Product of cycles]{On the cycle structure of the product of random maximal cycles}

\author{Mikl\'os B\'ona}
\address{Department of Mathematics, University of Florida, $358$ Little Hall, PO Box $118105$,
Gainesville, FL, $32611-8105$ (USA)}
\email{bona@ufl.edu}
\author{Boris Pittel}
\address{Department of Mathematics, The Ohio State University, $231$ West $18$-th Avenue, Columbus, Ohio $43210-1175$ (USA)}
\email{bgp@math.ohio-state.edu}

\keywords
{cycles, permutations, product, distribution, Fourier transform}

\subjclass[2010] {05A05, 05A15, 05A16, 05D40, 05E10}

\begin{abstract} The subject of this paper is the cycle structure of the random permutation $\sigma$ of $[N]$, which is the product of $k$ independent random cycles of maximal length $N$.  We use the character-based Fourier transform to study the counts of cycles of $\sigma$ by length and also the distribution of the elements of the subset $[\ell]$ among the cycles of $\sigma$. 
 \end{abstract}

\maketitle
\section{Introduction}
Enumeration of  permutations of a set $[N]=\{1,2,\dots,N\}$ according to the numbers of cycles of various lengths has a long and glorious history. The plentiful results are not infrequently cast in the
probabilistic light, if the assumption is made that a permutation is chosen {\em uniformly at random }
among all $N!$ permutations. The techniques vary widely, from bijective methods to
multivariate generating functions to functional limit theorems, allowing to find solutions, exact or
asymptotic, of rather delicate, enumerative-probabilistic, problems. More recently there has been
a growing interest in the probabilities regarding distribution of the elements of a subset $S\subseteq [N]$ 
among the cycles of the random permutation. For instance, we can determine the probability that each of the entries in $S$ will be in  a different cycle, or that all entries of $S$ will  be in the same cycle, or that each cycle of $p$ 
will contain at least one entry of $S$. See Lov\'asz \cite{Lovasz} for results of this kind. 

The classic, and more recent,  problems become much more difficult if instead of the uniformly random permutation, we consider a random permutation which is a {\em product} of  random {\em maximal} cycles. That is, our sample space is now that of all ordered $k$-tuples $(p_1,p_2,\cdots ,p_k)$, where all $p_i$ are maximal cycles of length $N$. One can investigate the random permutation
$\sigma:=p_1\cdots p_k$ under the assumption that $p_1,\dots,p_k$ are  maximal cycles,
chosen uniformly at random, and independently of each other, from all $(N-1)!$ such cycles.

 \subsection{Motivation and recent results}
Among the sources of our inspiration are Zagier's formula for the distribution of the number of cycles in $\sigma$ for $k=2$, and the more recent results by Stanley  \cite{Stanley2} and Bernardi et al. \cite{Bernardi}, again for $k=2$. For instance, in \cite{Bernardi}  a formula is proved for the probability that 
$\sigma$, the product of two maximal cycles, separates the {\em given disjoint} subsets of $[N]$, i.e. no two 
of those subsets are represented in the same cycle of $\sigma$. In particular,
the probability that $\sigma$ separates the entries $1,\dots,\ell$ is equal to $1/\ell!$ if $N-\ell$ is odd. In other words, in this aspect, the product of two independent maximal cycles behaves as the uniformly random permutation!

 Beside their intrinsic interest, solutions of the mentioned problems  may lead to surprising applications. 
In \cite{bona-flynn}, B\'ona and Flynn used a result of Stanley \cite{Stanley2} concerning the special case $S=\{1,2\}$ and $k=2$
to prove an exact formula for the average number of block interchanges needed to sort a permutation, a problem motivated
by genome sorting. Equally interesting are the methods that can be used, as they come from a wide array of areas in mathematics, such as character theory, multivariate Gaussian integration,  bijective combinatorics and the summation techniques for hypergeometric sums.

\subsection{Overview: methods and results} 

In $1986$ Harer and Zagier \cite{HarerZagier} discovered a remarkable formula for the bivariate generating 
function of the number of cycles in the product of a maximal cycle and the random, fixed-point
free, involution of $[2n]$, thus solving a difficult problem of enumerating the chord diagrams by the
genus of an associated surface. The proof was based on evaluation of the multidimensional 
Gaussian integrals. Soon after Jackson \cite{Jackson} and later Zagier \cite{Zagier} found
alternative proofs that used characters of the symmetric group $S_{2n}$. Recently the second
author \cite{Pittel} found a different, character-based proof. Its core is computing and marginally
inverting the Fourier transform of the underlying probability measure on $S_{2n}$. In the present paper, we use the techniques in \cite{Pittel}, see also an earlier paper by Chmutov
and Pittel \cite{Chmutov}, to investigate the product of $k$ maximal cycles in $S_N$. To make the discussion reasonably self-contained we will introduce the necessary definitions and facts from \cite{Pittel} in Section \ref{Prelim}. 

We begin Section \ref{cycles} with Lemma \ref{cycledist} that  states an explicit formula for the probability distribution of the
number of cycles in $\sigma$, the product of $k$ random, independent, maximal
cycles in $S_N$. Not surprisingly, the distribution is expressed through the Stirling numbers of
first kind.  In particular, this formula yields the known results, Stanley \cite{Stanley}, for the probabilities that $\sigma$ is
the identity permutation, or that  $\sigma$ is a maximal cycle.  Our analysis also delivers a well-known formula found by Zagier  for the case $k=2$. See
Corollary \ref{zagier} for this special case; see the Appendix by Zagier in Lando and Zvonkin \cite{LandoZvonkin} for the original result of Zagier. 
In Corollary \ref{zagier3}, we also obtain a bivariate generating function for the distribution of the number of cycles for the product of three cycles. We conclude this section with a relatively
compact, integral formula for the probability that the product of two cycles belongs to a given
conjugacy class.

Then, in Section \ref{A},  we turn to the following general question.  Let $p_A(N,\ell;k)$ be 
the probability that the number of elements of $[\ell]=\{1,2,\cdots ,\ell\}$ in each cycle of $\sigma $ comes from the
set $A\subseteq \Bbb Z_{\ge 0}$. What can we say about $p_A(N,\ell;k)$?

To this end, for a general $A$, we first enumerate the admissible permutations by the cycle counts and then evaluate
the sum of character values over all admissible permutations for irreducible representations
labeled by one-hook Young diagrams.  Then we consider the special case when $A= \Bbb Z_{>0}$, i.e. when each cycle of $\sigma $ contains at least one
element of $[\ell]$. Using the inverse Fourier transform, we find an alternating sum expression for this probability with 
$N-\ell+1$ binomial-type summands. This result is proved in Theorem \ref{formulaforA1}. For $k=2$,  this sum reduces to two
notably simpler  expressions, that can be  efficiently computed for moderate $\ell$ and moderate $N-\ell$
respectively.

Next we investigate the case of $A=\{0,\ell\}$, that is, when all elements of $\ell$ are in the same cycle of $\sigma$. 
This computation is longer than its counterpart in the previous case, and it leads to a general formula for $p_A(N,\ell;k)$, given in Theorem \ref{formulaforA2}, that is
analogous to that for $A=\Bbb Z_{>0}$. Again, if $k=2$, then the formula shrinks to a pair of computationally efficient sums for moderate $\ell$ and moderate $N-\ell$ respectively.
For $\ell=2$ and $\ell=3$, we recover the results obtained by Stanley \cite{Stanley2}. 

Having experimented with Maple, we feel confident  that the residual  sums for $k=2$ in either of the two cases do not have a more compact presentation.

After this, in Section \ref{separate}, we turn to our most general problem. We consider disjoint subsets ${\cal S}_1,{\cal S}_2,\cdots ,{\cal S_t}$ of $[N]$ so 
that $|{\cal S}_j|=\ell _j$;  define $\ell=\sum_j \ell_j$. Let $p(N,\vec\ell;k)$ denote the probability that no cycle of $\sigma$
contains elements from more than one
${\cal S}_j$, a property to which we refer by saying that $\sigma$ {\em separates} the sets 
${\cal S}_1,{\cal S}_2,\cdots, {\cal S}_t$.
Bernardi et al. \cite{Bernardi} found a striking formula for $p(N,\vec\ell;2)$ that contained an alternating sum of $\ell-t+1$ terms. Remarkably, the  factor $\prod_j\ell_j!$ aside, the rest of the formula depends on
$\ell$ and $t$ only. In Lemma \ref{generalk}, we show that the separation probability continues to have this
latter property  for all $k\ge 2$, and find an alternating sum formula with $N-\ell+t+1$ terms for this probability, which is computationally efficient if  $t$ and $N-\ell$ are both bounded as $N$ grows. Then, for $k=2$, we are able to simplify this formula to one that is close in appearance, but is significantly different from
 the formula in \cite{Bernardi}. This formula is given in Theorem \ref{k=2separate}, and it  still contains a sum of $\ell-t+1$ summands, but the signs are no longer alternating.  

Finally, in Section \ref{isolated}, we consider the following question. Let us say that the elements of $[\ell]$ are blocked in a permutation 
$s$ of $[N]$ if no two elements of $[\ell]$ are neighbors, {\em and} each element of $[\ell]$ has a neighbor from 
$[N] \setminus [\ell]$. Then, for a general $k\ge 2$,  we find a {\em two-term} formula for the probability that $\sigma$ blocks the elements of $[\ell]$.
This formula is proved in Theorem \ref{twotermformula}.

While on occasion our proofs deliver the already known results, 
we hope that the employed techniques can be used
for a broader variety of problems on cyclic structure of the products of random permutations. 

\section{Preliminaries}\label{Prelim}
A key observation is that the set of all maximal cycles forms a {\em conjugacy class} in the symmetric group $S_N$, a class with particularly simple character values. We mention that permutations generated by a given conjugated class were studied
for instance by Diaconis \cite{diaconis1,diaconis2},  Lulov and Pak \cite{lulov}, and, from a more algebraic point of view, 
by Liebeck, Nikolov, and Shalev \cite{liebeck}.

Let us start with the Fourier inversion formula for a general probability measure $P$ on $S_N$:\begin{equation}\label{geninv}
P(s)=\frac{1}{N!}\sum_{\la\vdash N}f^{\la}\,\text{tr}\bigl(\rho^{\la}(s^{-1})\hat P(\rho^{\la})\bigr);\quad s\in S_N;
\end{equation}
see  Diaconis and Shahshahani \cite{DiaconisShahshahani} and Diaconis \cite{Diaconis}.
Here $\la$ is a generic partition of the integer $N$, $\rho^{\la}$ is the irreducible representation of $S_N$
associated with $\la$, $f^{\la}=\text{dim}(\rho^{\la})$, and $\hat P(\rho^{\la})$ is the $f^{\la}\times f^{\la}$ matrix-valued Fourier transform of $P(\cdot)$ evaluated at $\rho^{\la}$, 
$
\hat P(\rho^{\la})=\sum_{s\in S_N} \rho^{\la}(s) P(s).
$
Let us evaluate the right-hand side of \eqref{geninv} for $P=P_{\sigma}$, the probability measure on $S_N$
induced by $\sigma=\prod_{j=1}^k \sigma_j$, where $\sigma_j$ is uniform on a conjugacy 
class $C_j$.  As the $\sigma_j$ are independent, we have that  $P_{\sigma}(s)=\sum_{s_1,\dots, s_k}\prod_jP_{\sigma_j}(s_j)$, ($s_1\cdots s_k=s$), that is, $P_{\sigma}$ is the convolution of $P_{\sigma_1},\dots,P_{\sigma_k}$. So, by multiplicativity
of the Fourier transform for convolutions, 
$
\hat{P}_{\sigma}(\rho^{\la}) = \prod_j \hat {P}_{\sigma_j}(\rho^{\la}).
$
Since each $P_{\sigma_j}$ is supported by the single conjugacy class $C_j$, we have
$\hat P_{\sigma_j}(\rho^{\la})=\tfrac{\chi^{\la}(C_j)}{f^{\la}}\,I_{f^{\la}}$, $I_{f^{\la}}$ being the
$f^{\la}\times f^{\la}$ identity matrix, see \cite{Diaconis}. So
\[
\hat{P}_{\sigma}(\rho^{\la}) = \prod_{j=1}^k\hat {P}_{\sigma_j}(\rho^{\la})=
(f^{\la})^{-k}\prod_{j=1}^k\chi^{\la}(C_j)\,I_{f^{\la}},
\]
and  \eqref{geninv} becomes
\begin{equation}\label{3chi}
\begin{aligned}
P_{\sigma}(s)&=\frac{1}{N!}\sum_{\la}(f^{\la})^{-k+1}\,\left(\prod_{j=1}^k\chi^{\la}(C_j)\right)\,\text{tr}
\bigl(\rho^{\la}(s^{-1})I_{f^{\la}}\bigr)\\
&=\frac{1}{N!}\sum_{\la}(f^{\la})^{-k+1}\chi^{\la}(s)\prod_{j=1}^k\chi^{\la}(C_j);
\end{aligned}
\end{equation}
see Stanley \cite{Stanley}, Exercise 7. 67.

{\bf Note.\/} For the special case $s=\text{id}$, the identity \eqref{3chi} becomes
\[
\pr_{\sigma}(\text{id})=\frac{1}{N!}\sum_{\la}(f^{\la})^{-k+2}\prod_{j=1}^k\chi^{\la}(C_j).
\]
Since the left-hand side is just ${\cal N}(C_1,\dots,C_k)$, the number of ways to write the identity
permutation as the product of elements of $C_1,\dots,C_k$, divided by $\prod_{j=1}^k |C_j|$,
we obtain the well-known $S_N$-version of Frobenius's  identity 
\begin{equation}\label{Frob}
{\cal N}(C_1,\dots,C_k)=\frac{\prod_{j=1}^k|C_j|}{N!}\sum_{\la}(f^{\la})^{-k+2}\prod_{j=1}^k\chi^{\la}(C_j).
\end{equation}

We will use \eqref{3chi} for $C_j\equiv \cal C_N$, where $\cal C_N$ is the conjugacy class of all
maximal cycles.  By the Murnaghan-Nakayama rule, Sagan \cite{Sagan} (Lemma 4.10.2) or
Stanley \cite{Stanley} (Section 7.17, Equation (7.75)), $\chi^{\la}(\cal C_N)=0$  unless the
diagram $\la$ is a single hook $\la^*$, with one row of length $\la_1$ and one column of height $\la^{1}$, so $\la_1+\la^1=N+1$.  In that case 
\begin{equation}\label{chiN}
\chi^{\la}({\cal C}_N)=(-1)^{\la^1-1}. 
\end{equation}
As for $f^{\la^*}$, the number of Standard Young Tableaux of shape $\la^*$,  applying the hook length formula (or simply selecting the entries that go in the first column),
 we obtain
\begin{equation}\label{fla*=}
f^{\la^*}=\frac{N!}{N\prod_{r=1}^{\la_1-1} r\,\prod_{s=1}^{\la^1-1}s}=\binom{N-1}{\la_1-1}.
\end{equation}
The equations \eqref{3chi}, \eqref{chiN} and \eqref{fla*=} imply 
\begin{equation}\label{Psigma(s)}
P_{\sigma}(s)=\frac{1}{N!}\sum_{\la^*}(-1)^{k(\la^1-1)}\binom{N-1}{\la_1-1}^{-k+1}\chi^{\la^*}(s).
\end{equation}
By the Murnaghan-Nakayama rule, given a hook diagram $\la^*$, the value of  $\chi^{\la^*}(s)$ depends on $s$ only through $\vec{\nu}=\vec{\nu}(s):=\{\nu_r\}_{r\ge 1}$, where $\nu_r=\nu_r(s)$ is the total number of $r$-long cycles in the permutation $s$. It was proved in \cite{Pittel} that 
\begin{equation}\label{chila*(nu)}
\chi^{\la^*}(s)=(-1)^{\la^1+\nu}\,[\xi^{\la_1}]\, \frac{\xi}{1-\xi}\prod_{r\ge 1}(1-\xi^r)^{\nu_r},
\end{equation}
$\nu(s):=\sum_r\nu_r(s)$ being the total number of cycles of $s$. From \eqref{chila*(nu)} it follows that 
\begin{equation}\label{sumchila*,nu}
\begin{aligned}
\sum_{s:\, \vec{\nu}(s)=\vec\nu}\chi^{\la^*}(s)&=(-1)^{N}N! \,{\cal A}(N,\nu,\la_1),\\
{\cal A}(N,\nu,\la_1)&:=\binom{N-1}{N-\la_1}\sum_{\ell\ge 1}(-1)^{\ell}\,\frac{s(\ell,\nu)}{\ell!}
\binom{N-\la_1}{N-\ell},
\end{aligned}
\end{equation}
where $s(\ell,\nu)$ is the signless, first-kind,  Stirling number  of 
permutations of $[\ell]=\{1,2,\cdots ,\ell\}$ with $\nu$ cycles; see the proof of Theorem 2.1 and the equation (2.20) in \cite{Pittel}. 
The formulas \eqref{3chi}, \eqref{chila*(nu)} and \eqref{sumchila*,nu} are the basis of the proofs that follow.

\section{Distribution of the number of cycles in $\sigma$}\label{cycles}  To stress dependence 
of $\sigma$ on $k$, in this section we will write $\sigma^{(k)}$ instead of $\sigma$. The following lemma
will be useful in our computations. 

Combining 
\eqref{sumchila*,nu} and \eqref{Psigma(s)}, and using $\la^1+\la_1=N+1$, we obtain the following formula. 

\begin{lemma} \label{cycledist} The identity 
\begin{equation}\label{P(nu(sigma)=nu)}
\begin{aligned}
\pr(\nu(\sigma^{(k)})=\nu)&=(-1)^N\sum_{\la_1=1}^N(-1)^{k(N-\la_1)}\binom{N-1}{N-\la_1}^{-k+2}\\
&\quad\times\sum_{\ell\ge 1}(-1)^{\ell}\,\frac{s(\ell,\nu)}{\ell!}
\binom{N-\la_1}{N-\ell}
\end{aligned}
\end{equation}
holds. 
\end{lemma}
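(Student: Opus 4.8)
The plan is to assemble the formula directly from the three displayed identities that the Preliminaries section has already prepared for us, so the proof is essentially a bookkeeping exercise in combining them. The starting point is \eqref{Psigma(s)}, which expresses $P_\sigma(s)$ as a sum over hook diagrams $\la^*$ of a coefficient times $\chi^{\la^*}(s)$. Since $\nu(\sigma^{(k)})$ depends on $s$ only through its cycle type, and in fact $\chi^{\la^*}(s)$ itself depends on $s$ only through $\vec\nu(s)$ by \eqref{chila*(nu)}, the event $\{\nu(\sigma^{(k)})=\nu\}$ is a union of cycle-type classes, and I can obtain $\pr(\nu(\sigma^{(k)})=\nu)$ by summing $P_\sigma(s)$ over all $s$ with exactly $\nu$ cycles.

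\medskip\noindent
The key step is to interchange the order of summation. Writing
\[
\pr(\nu(\sigma^{(k)})=\nu)=\sum_{s:\,\nu(s)=\nu}P_\sigma(s)
=\frac{1}{N!}\sum_{\la^*}(-1)^{k(\la^1-1)}\binom{N-1}{\la_1-1}^{-k+1}\sum_{s:\,\nu(s)=\nu}\chi^{\la^*}(s),
\]
I can replace the inner sum over permutations with $\nu$ cycles by grouping over the finitely many cycle-type vectors $\vec\nu$ with $\sum_r\nu_r=\nu$ and invoking \eqref{sumchila*,nu}. Summing the right-hand side of \eqref{sumchila*,nu} over all such $\vec\nu$ replaces $s(\ell,\nu)$ — which counts permutations of $[\ell]$ with a prescribed cycle-type refinement — by the signless Stirling number $s(\ell,\nu)$ counting all permutations of $[\ell]$ with exactly $\nu$ cycles; that is the reason the final formula carries $s(\ell,\nu)$ rather than a finer statistic. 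This yields
\[
\sum_{s:\,\nu(s)=\nu}\chi^{\la^*}(s)=(-1)^N N!\,\binom{N-1}{N-\la_1}\sum_{\ell\ge1}(-1)^\ell\frac{s(\ell,\nu)}{\ell!}\binom{N-\la_1}{N-\ell}.
\]

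\medskip\noindent
Substituting this back and cancelling the $N!$ factor against the $1/N!$ in front gives a single sum over hook diagrams $\la^*$. Finally I use the hook relation $\la^1+\la_1=N+1$ to rewrite the sign and the binomial exponent: from $\la^1-1=N-\la_1$ we get $(-1)^{k(\la^1-1)}=(-1)^{k(N-\la_1)}$, and $\binom{N-1}{\la_1-1}=\binom{N-1}{N-\la_1}$, so the coefficient $\binom{N-1}{\la_1-1}^{-k+1}$ from \eqref{Psigma(s)} combined with the extra factor $\binom{N-1}{N-\la_1}$ from \eqref{sumchila*,nu} produces $\binom{N-1}{N-\la_1}^{-k+2}$. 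Reindexing the outer sum by $\la_1$ running from $1$ to $N$ (each value corresponding to a unique hook) delivers exactly \eqref{P(nu(sigma)=nu)}.

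\medskip\noindent
I do not anticipate a genuine obstacle here, since the heavy lifting — the character evaluation \eqref{chila*(nu)} and the Stirling-number summation \eqref{sumchila*,nu} — is imported from \cite{Pittel}. The only point requiring mild care is the interchange of summation and the claim that summing over all cycle-type vectors with $\nu$ cycles converts the finer count into $s(\ell,\nu)$; one should check that \eqref{sumchila*,nu} as stated already incorporates this aggregation (the notation $s(\ell,\nu)$ suggests it does, as it is indexed by the total cycle count $\nu$ rather than by $\vec\nu$), in which case no separate summation over $\vec\nu$ is even needed and the result follows by direct substitution.
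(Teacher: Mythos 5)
Your proposal is correct and follows exactly the route the paper itself takes: its proof of Lemma \ref{cycledist} is the one-line statement that combining \eqref{sumchila*,nu}, \eqref{Psigma(s)} and $\la^1+\la_1=N+1$ gives the result, and your write-up simply makes that substitution explicit. Your closing observation is also the right reading of \eqref{sumchila*,nu}: since its right-hand side depends on $\vec\nu$ only through $\nu=\sum_r\nu_r$, the identity is already the aggregated character sum over all permutations with $\nu$ cycles, so direct substitution suffices.
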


\begin{proof}
Combination of
\eqref{sumchila*,nu}, \eqref{Psigma(s)}, and $\la^1+\la_1=N+1$ proves \eqref{P(nu(sigma)=nu)}. 
\end{proof}

\begin{corollary} \label{sigma-id} For $k\ge 2$, the identiy 
\begin{equation}\label{Pr(id)=}
\begin{aligned}
\pr(\sigma^{(k)}=\text{id})=&\,\pr(\nu(\sigma)=N)
=\frac{1}{N!}\sum_{r=0}^{N-1}(-1)^{kr}\binom{N-1}{r}^{-k+2},
\end{aligned}
\end{equation}
holds.
\end{corollary}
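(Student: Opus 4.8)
The plan is to specialize the master formula \eqref{P(nu(sigma)=nu)} of Lemma \ref{cycledist} to the extreme value $\nu=N$. First, the two probabilities in \eqref{Pr(id)=} coincide for a trivial reason: a permutation of $[N]$ has exactly $N$ cycles precisely when each of its cycles has length one, i.e.\ when it is the identity, so $\{\nu(\sigma^{(k)})=N\}$ and $\{\sigma^{(k)}=\text{id}\}$ are the same event, and the first equality is immediate.

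For the substantive equality I would set $\nu=N$ in \eqref{P(nu(sigma)=nu)} and determine which summands survive. The decisive point is the inner sum over $\ell$. The signless Stirling number $s(\ell,N)$ counts the permutations of an $\ell$-element set having $N$ cycles; since such a permutation cannot have more cycles than elements, $s(\ell,N)=0$ whenever $\ell<N$. Simultaneously, the binomial coefficient $\binom{N-\la_1}{N-\ell}$ vanishes unless $N-\ell\ge 0$, i.e.\ unless $\ell\le N$. The two conditions together force $\ell=N$, and there $s(N,N)=1$ and $\binom{N-\la_1}{N-\ell}=\binom{N-\la_1}{0}=1$. Hence, for every $\la_1$, the inner sum collapses to the single contribution $(-1)^N/N!$, independently of $\la_1$.

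Substituting this value back and using $(-1)^N\cdot(-1)^N=1$ to cancel the outer sign against the surviving one leaves
\[
\pr(\nu(\sigma^{(k)})=N)=\frac{1}{N!}\sum_{\la_1=1}^N(-1)^{k(N-\la_1)}\binom{N-1}{N-\la_1}^{-k+2}.
\]
The proof then finishes by the reindexing $r=N-\la_1$, under which $r$ runs over $0,1,\dots,N-1$ as $\la_1$ decreases from $N$ to $1$; this converts the display into $\tfrac{1}{N!}\sum_{r=0}^{N-1}(-1)^{kr}\binom{N-1}{r}^{-k+2}$, which is exactly \eqref{Pr(id)=}.

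I do not expect any genuine obstacle: the whole argument is a clean specialization of Lemma \ref{cycledist}. The only step demanding care is the isolation of the term $\ell=N$, where one must couple the combinatorial vanishing $s(\ell,N)=0$ for $\ell<N$ with the support condition $\ell\le N$ coming from the binomial coefficient, and then keep honest track of the two factors $(-1)^N$ so as to see that they annihilate rather than reinforce. (The hypothesis $k\ge 2$ plays no essential role in the specialization itself; it merely places us in the regime where the exponent $-k+2$ on the binomial coefficients is nonpositive, the term $r=0$ contributing $\binom{N-1}{0}^{-k+2}=1$.)
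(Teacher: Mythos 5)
Your proof is correct and follows exactly the paper's route: specialize Lemma \ref{cycledist} to $\nu=N$ and use $s(\ell,N)=0$ for $\ell<N$ (together with the vanishing of $\binom{N-\la_1}{N-\ell}$ for $\ell>N$) to collapse the inner sum to the single term $\ell=N$. The paper's own proof is a one-line version of the same argument; your bookkeeping of the signs and the reindexing $r=N-\la_1$ fills in the details correctly.
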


\begin{proof}
Use formula \eqref{P(nu(sigma)=nu)} and the fact that $s(\ell,\nu)=0$ for $\ell<\nu$.
\end{proof}

Note that formula (\ref{Pr(id)=}) appears as equation (7.181) in \cite{Stanley}.  In the special case
of $k=2$  Corollary \ref{sigma-id} yields 
 \begin{equation}\label{P(sigma2isid)}
 \pr(\sigma^{(2)}=\text{id})=\tfrac{1}{(N-1)!}. 
 \end{equation}
 This is an obvious result, since the inverse of the uniformly random cycle is again the uniformly
 random cycle.
 
The special case of $k=3$ is not so obvious. However, combining \eqref{Pr(id)=} and  the identity 
\begin{equation}\label{Sury}
\sum_{r=a}^n \frac{(-1)^r}{\binom{n}{r}}=\frac{n+1}{n+2}\left[\frac{(-1)^a}{\binom{n+1}{a}}+(-1)^n\right]
\end{equation}
(Sury \cite{Sury}, Stanley \cite{Stanley}, equation (7.211),  Sury et al. \cite {SWZ}), we have  a non-obvious answer
\begin{equation}\label{P(siigma3=id)}
\pr(\sigma^{(3)}=\text{id})=\frac{1+(-1)^{N-1}}{(N-1)!(N+1)},
\end{equation}
see \cite{Stanley}, Exercise 7.67 (d).

The remarkable identity \eqref{Sury}  followed from the elementary, yet surprisingly powerful, 
formula
\begin{equation}\label{recbin=}
\binom{n}{r}^{-1}=(n+1)\int_0^1t^r(1-t)^{n-r}\,dt.
\end{equation}

Note that for the even $N$,  equation \eqref{P(siigma3=id)} returns zero probability, and that is how it should be, since the product of three
even cycles is an odd permutation, and therefore, cannot be the identity. Furthermore, since
$\sigma^{(k)}=\sigma^{(k-1)}\sigma_k$, $\sigma^{(k)}$ is the identity iff $\sigma^{(k-1)}=
(\sigma_{k})^{-1}$, which is a maximal cycle. As $(\sigma_{k})^{-1}$ is uniform on the set of all
$(N-1)!$ maximal cycles,  {\it and\/} independent of $\sigma^{(k-1)}$, we see then that
\begin{equation}\label{P(cycle)=P(id)}
\pr(\sigma^{(k-1)}\text{ is a cycle})=(N-1)!\pr(\sigma^{(k)}=\text{id}).
\end{equation}

In the special case of $k=2$, we rediscover a result that has been proved several times, with different methods. 
\begin{corollary} \label{cycleproducts}
We have
\begin{equation}\label{P(sigma2=cycle)}
\pr(\sigma^{(2)}\text{ is a cycle})=\frac{1+(-1)^{N-1}}{N+1}.
\end{equation}
\end{corollary}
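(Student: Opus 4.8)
The plan is to prove nothing new: the desired identity falls out by composing two facts already established just above. Specifically, the combinatorial identity \eqref{P(cycle)=P(id)}, specialized to $k=3$, reads
\[
\pr(\sigma^{(2)}\text{ is a cycle})=(N-1)!\,\pr(\sigma^{(3)}=\text{id}),
\]
while \eqref{P(siigma3=id)} evaluates the right-hand probability explicitly as $\tfrac{1+(-1)^{N-1}}{(N-1)!\,(N+1)}$. So the first step I would take is simply to invoke \eqref{P(cycle)=P(id)} with $k=3$, which expresses the probability that the product of two maximal cycles is itself a single cycle in terms of the probability that the product of three maximal cycles is the identity.

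The second step is the substitution. Inserting the value from \eqref{P(siigma3=id)} gives
\[
\pr(\sigma^{(2)}\text{ is a cycle})=(N-1)!\cdot\frac{1+(-1)^{N-1}}{(N-1)!\,(N+1)},
\]
and the factor $(N-1)!$ cancels, leaving $\tfrac{1+(-1)^{N-1}}{N+1}$, as claimed. There is no genuine obstacle at this stage; the real content was already discharged in deriving \eqref{P(siigma3=id)} (via Corollary \ref{sigma-id} together with Sury's identity \eqref{Sury}) and in the elementary independence-and-symmetry argument behind \eqref{P(cycle)=P(id)}.

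For completeness I would also record the self-contained alternative, which bypasses the $k=3$ detour: apply Lemma \ref{cycledist} directly with $k=2$ and $\nu=1$. Since $-k+2=0$ the inverse binomial coefficients collapse to $1$, the sign $(-1)^{k(N-\la_1)}$ equals $1$, and $s(\ell,1)/\ell!=(\ell-1)!/\ell!=1/\ell$, so the probability reduces to $(-1)^N\sum_{\la_1}\sum_{\ell}\tfrac{(-1)^\ell}{\ell}\binom{N-\la_1}{N-\ell}$. The only nontrivial work along this route would be interchanging the order of summation and resumming the inner binomial sum over $\la_1$; this is precisely the manipulation already packaged into \eqref{Sury}, which is exactly why the two-line approach via \eqref{P(cycle)=P(id)} and \eqref{P(siigma3=id)} is the one I would present.
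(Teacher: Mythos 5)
Your proposal is correct and follows exactly the paper's own proof: the paper also deduces \eqref{P(sigma2=cycle)} immediately by combining \eqref{P(cycle)=P(id)} (with $k=3$) and \eqref{P(siigma3=id)}. The alternative route you sketch via Lemma \ref{cycledist} is a reasonable sanity check but is not needed.
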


\begin{proof} Immediate from 
  equations \eqref{P(siigma3=id)} and \eqref{P(cycle)=P(id)}.
\end{proof}

For even $N$, the statement of Corollary \ref{cycleproducts} is obvious, since the product of two maximal cycles is an even permutation, and hence, it cannot be an $N$-cycle for even $N$.  For odd $N$, the result  is 
equivalent to a well-known, but not at all obvious, fact that there are $\tfrac{2(N-1)!}{N+1}$ ways to
factor a given maximal cycle into a product of two maximal cycles; see for instance
\cite{cangelmi} and the references therein.  In general, the equations \eqref{Pr(id)=}, \eqref{P(cycle)=P(id)} imply 
the following. 

\begin{corollary}
For all positive integers $k$, the formula
\begin{equation}\label{P(cycle)gen}
\pr(\sigma^{(k)}\text{ is a cycle})=\frac{1}{N}\sum_{r=0}^{N-1}(-1)^{(k+1)r}\binom{N-1}{r}^{-k+1}.
\end{equation}
holds.
\end{corollary}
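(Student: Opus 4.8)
The plan is to combine the two facts that are already in hand, with no new machinery required: the closed form \eqref{Pr(id)=} for $\pr(\sigma^{(k)}=\text{id})$ and the index-shift identity \eqref{P(cycle)=P(id)}, which relates the event that a product of $k-1$ maximal cycles is itself a maximal cycle to the event that a product of $k$ maximal cycles is the identity. The whole argument is a one-step substitution followed by a trivial simplification.

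First I would rewrite \eqref{P(cycle)=P(id)} with $k$ replaced by $k+1$, obtaining
\[
\pr(\sigma^{(k)}\text{ is a cycle})=(N-1)!\,\pr(\sigma^{(k+1)}=\text{id}).
\]
Next I would insert into the right-hand side the formula \eqref{Pr(id)=} evaluated at $k+1$ in place of $k$; the binomial exponent there becomes $-(k+1)+2=-k+1$ and the parity weight becomes $(-1)^{(k+1)r}$, so that
\[
\pr(\sigma^{(k+1)}=\text{id})=\frac{1}{N!}\sum_{r=0}^{N-1}(-1)^{(k+1)r}\binom{N-1}{r}^{-k+1}.
\]
Multiplying by $(N-1)!$ and using $(N-1)!/N!=1/N$ then yields the claimed formula \eqref{P(cycle)gen} directly.

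There is essentially no obstacle here; the only points warranting care are bookkeeping ones. I would confirm the range of validity: \eqref{Pr(id)=} was stated for $k\ge 2$, so the substitution $k\mapsto k+1$ is legitimate precisely for $k\ge 1$, covering all positive integers $k$ as asserted. I would also record the degenerate check at $k=1$, where $\sigma^{(1)}$ is a single maximal cycle and is trivially a cycle with probability $1$: in that case the exponent $-k+1$ vanishes and the parity weight is $(-1)^{2r}=1$, so the sum telescopes to $\tfrac{1}{N}\sum_{r=0}^{N-1}1=1$, in agreement with the formula and with the base relation \eqref{P(sigma2isid)}. Beyond these sanity checks, the proof is immediate from \eqref{Pr(id)=} and \eqref{P(cycle)=P(id)}.
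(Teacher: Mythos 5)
Your proof is correct and is exactly the paper's argument: the paper derives this corollary by combining \eqref{Pr(id)=} and \eqref{P(cycle)=P(id)}, precisely the substitution $k\mapsto k+1$ that you carry out. The bookkeeping of the exponent $-k+1$, the sign $(-1)^{(k+1)r}$, and the factor $(N-1)!/N!=1/N$ all check out, and the $k=1$ sanity check is a nice touch.
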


Further, it follows from \eqref{P(nu(sigma)=nu)} that for every real number $x$, we have
\begin{align}
\ex\bigl[x^{\nu(\sigma)}\bigr]&=(-1)^N\sum_{\la_1=1}^N(-1)^{k(N-\la_1)}\binom{N-1}{N-\la_1}^{-k+2}\notag\\
&\quad\times\sum_{\ell\ge 1}\frac{(-1)^{\ell}}{\ell!}
\binom{N-\la_1}{N-\ell}\sum_{\nu\ge 1}x^{\nu}s(\ell,\nu)\notag\\
&=(-1)^N\sum_{\la_1=1}^N(-1)^{k(N-\la_1)}\binom{N-1}{N-\la_1}^{-k+2}\sum_{\ell\ge 1}\binom{N-\la_1}{N-\ell}\binom{-x}{\ell}\notag\\
&=(-1)^N\sum_{\la_1=1}^N(-1)^{k(N-\la_1)}\binom{N-1}{N-\la_1}^{-k+2}\binom{N-\la_1-x}{N}\notag\\
&=(-1)^N\sum_{r=0}^{N-1}(-1)^{kr}\binom{N-1}{r}^{-k+2}\binom{r-x}{N}.\label{k,Exnu(sigma)}
\end{align}
For a positive integer $x$, the non-zero contributions to the sum come from $r<\min\{N,x\}$. So,
for instance,
\begin{align*}
\ex\bigl[2^{\nu(\sigma^{(k)})}\bigr]=&\,N+1+\frac{(-1)^k}{(N-1)^{k-2}},\quad (N>1),\\
\ex\bigl[3^{\nu(\sigma^{(k)})}\bigr]=&\,2(N+2)_2-\frac{N+1}{(N-1)^{k-2}}+\binom{N-1}{2}^{-k+2},
\quad (N>2),
\end{align*}
where we use the notation $(a)_b=a(a-1)\cdots (a-b+1)$, for integers $a\geq b\geq 0$.

For $k=2$ and $x>N$,  equation \eqref{k,Exnu(sigma)} implies the following formula. 

\begin{corollary} \label{zagier}
The identity
\begin{align}
\ex\bigl[x^{\nu(\sigma^{(2)})}\bigr]&=(-1)^N\sum_{\la_1=1}^N\binom{N-\la_1-x}{N}\notag\\
&=\sum_{\la_1=1}^N\binom{\la_1+x-1}{N}
=\sum_{j=N}^{N+x-1}\binom{j}{N}-\sum_{j=N}^{x-1}\binom{j}{N}\notag\\
&=\binom{N+x}{N+1}-\binom{x}{N+1}=\binom{N+x}{N+1}+(-1)^N\binom{N-x}{N+1}
\label{2,Exnu(sigma)} 
\end{align} holds. 
\end{corollary}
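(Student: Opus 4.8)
The plan is to specialize the general formula \eqref{k,Exnu(sigma)} to $k=2$ and then push the resulting single sum through a short chain of classical binomial identities. First I would set $k=2$ in \eqref{k,Exnu(sigma)}: the exponent $-k+2$ vanishes, so every factor $\binom{N-1}{r}^{-k+2}$ collapses to $1$, and the sign $(-1)^{kr}=(-1)^{2r}$ becomes $1$. Reindexing by $\la_1=N-r$ (so that $r=N-\la_1$ ranges over $0,\dots,N-1$ as $\la_1$ ranges over $1,\dots,N$) turns the right-hand side into $(-1)^N\sum_{\la_1=1}^N\binom{N-\la_1-x}{N}$, which is the first line of \eqref{2,Exnu(sigma)}.

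Next I would apply the upper-negation identity $\binom{a}{N}=(-1)^N\binom{N-a-1}{N}$ with $a=N-\la_1-x$, which gives $\binom{N-\la_1-x}{N}=(-1)^N\binom{\la_1+x-1}{N}$; the two factors of $(-1)^N$ then cancel, leaving $\sum_{\la_1=1}^N\binom{\la_1+x-1}{N}$. The substitution $j=\la_1+x-1$ rewrites this as $\sum_{j=x}^{N+x-1}\binom{j}{N}$. Here I would invoke the hypothesis $x>N$, so that $x-1\ge N$, and express this as the difference $\sum_{j=N}^{N+x-1}\binom{j}{N}-\sum_{j=N}^{x-1}\binom{j}{N}$, matching the second line of \eqref{2,Exnu(sigma)}.

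The two sums are now in hockey-stick form, and the identity $\sum_{j=N}^{M}\binom{j}{N}=\binom{M+1}{N+1}$ evaluates them to $\binom{N+x}{N+1}$ and $\binom{x}{N+1}$, respectively, yielding $\binom{N+x}{N+1}-\binom{x}{N+1}$. A final application of upper negation, $\binom{x}{N+1}=(-1)^{N+1}\binom{N-x}{N+1}$, turns the subtracted term into $-\binom{x}{N+1}=(-1)^{N}\binom{N-x}{N+1}$, producing the symmetric last form in \eqref{2,Exnu(sigma)}.

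I do not expect a genuine obstacle: the whole argument is elementary bookkeeping. The two points needing care are tracking the $(-1)^N$ signs across the two upper-negation steps, and the range condition, since the splitting into the hockey-stick difference is exactly where $x>N$ is used, guaranteeing that the subtracted sum $\sum_{j=N}^{x-1}$ has a nonempty, correctly ordered range. One could remark in passing that, as both sides are polynomials in $x$, the final identity in fact persists for all $x$, but the displayed chain of equalities is cleanest under the assumption $x>N$.
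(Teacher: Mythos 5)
Your proposal is correct and follows essentially the same route as the paper: the corollary's displayed chain of equalities is exactly the specialization of \eqref{k,Exnu(sigma)} to $k=2$, followed by upper negation, the reindexing and splitting of the sum (using $x>N$), the hockey-stick evaluation, and a final upper negation, with the polynomiality remark extending the identity to all $x$. Your sign bookkeeping and the role of the hypothesis $x>N$ are handled exactly as the paper intends.
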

Of course, the identity \eqref{2,Exnu(sigma)} holds for all $x$. It is equivalent to Zagier's result, (see  the Appendix by Zagier in Lando and Zvonkin \cite{LandoZvonkin}), stating 
that 
\[
\pr(\nu(\sigma^{(2)})=\nu)=(1+(-1)^{N-\nu})\,[x^{\nu}]\binom{N+x}{N+1}.
\]

For $k=3$, we can prove the following analogue of Corollary \ref{zagier}. 


\begin{corollary} \label{zagier3}
 \begin{equation}\label{k=3,H-Z}
\sum_{N\ge 1}\frac{y^N}{N}\ex\bigl[x^{\nu(\sigma^{(3)}(N))}\bigr]=
\int_0^1\frac{(1-y(1-t))^{-x} -(1-y(1-t))^x}{1-yt(1-t)}\,dt;
\end{equation}
where $\sigma^{(3)}(N)$ is the product of $3$ random cycles of length $N$, and $|x|\le1$, $|y|<1$.
\end{corollary}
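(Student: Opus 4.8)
The plan is to specialize the general formula \eqref{k,Exnu(sigma)} to $k=3$ and then trade the reciprocal binomial coefficients for integrals via \eqref{recbin=}, just as was done to obtain \eqref{Sury}. Setting $k=3$ in \eqref{k,Exnu(sigma)} and using $(-1)^{3r}=(-1)^r$, $\binom{N-1}{r}^{-3+2}=\binom{N-1}{r}^{-1}$ gives
\[
\ex\bigl[x^{\nu(\sigma^{(3)}(N))}\bigr]=(-1)^N\sum_{r=0}^{N-1}(-1)^{r}\binom{N-1}{r}^{-1}\binom{r-x}{N}.
\]
The factor $\binom{N-1}{r}^{-1}=N\int_0^1 t^r(1-t)^{N-1-r}\,dt$ from \eqref{recbin=} cancels the $1/N$ in $\tfrac1N\ex\bigl[x^{\nu}\bigr]$. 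Summing over $N\ge1$ and interchanging summation with integration (legitimate for $|x|\le1$, $|y|<1$, where the resulting double series is absolutely convergent), and then grouping by fixed $r$ with $N\ge r+1$, I obtain
\[
\sum_{N\ge1}\frac{y^N}{N}\ex\bigl[x^{\nu(\sigma^{(3)}(N))}\bigr]
=\int_0^1\sum_{r\ge0}(-1)^{r}\frac{t^r}{(1-t)^{r+1}}\sum_{N\ge r+1}\binom{r-x}{N}\bigl(-y(1-t)\bigr)^{N}\,dt.
\]

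The next step is to complete the inner sum over $N$ to a full binomial series. Writing $w=y(1-t)$ and using $\sum_{N\ge0}\binom{r-x}{N}(-w)^N=(1-w)^{r-x}$, the completed contribution sums geometrically in $r$:
\[
\sum_{r\ge0}(-1)^r\frac{t^r}{(1-t)^{r+1}}(1-w)^{r-x}
=\frac{(1-w)^{-x}}{1-t+t(1-w)}=\frac{(1-y(1-t))^{-x}}{1-yt(1-t)},
\]
which is exactly the first term of the claimed integrand in \eqref{k=3,H-Z}. It then remains to show that the complementary terms — the partial sums $\sum_{N=0}^{r}\binom{r-x}{N}(-w)^N$ subtracted to complete each binomial series — contribute precisely $-(1-y(1-t))^{x}/(1-yt(1-t))$ after integration in $t$.

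This last step is the main obstacle. Each truncated sum carries the factor $(1-t)^{-r-1}$, so its terms have non‑integrable singularities at $t=1$ and cannot be integrated individually; the singularities must cancel within the sum, so one cannot split the integrand naively. I would control this by reindexing the double series so that it stays convergent throughout, and by exploiting the symmetry $t\mapsto 1-t$, under which $y(1-t)\leftrightarrow yt$ while the denominator $1-yt(1-t)$ is invariant; this symmetry is what converts the residual $(1-yt)$‑type contribution back into a $(1-y(1-t))$‑type term. A useful structural check is that $\sigma^{(3)}(N)$ is always an odd permutation, being a product of three cycles each of sign $(-1)^{N-1}$, so $\nu(\sigma^{(3)}(N))$ is always odd and $\ex\bigl[x^{\nu(\sigma^{(3)}(N))}\bigr]$ is an odd function of $x$; the antisymmetric numerator $(1-w)^{-x}-(1-w)^{x}$ is forced by this parity, confirming that the complementary terms must supply exactly the $(1-w)^{x}$ piece. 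Combining the completed and complementary contributions then yields \eqref{k=3,H-Z}.
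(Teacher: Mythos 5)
Your proposal starts from the same place as the paper (specializing \eqref{k,Exnu(sigma)} to $k=3$ and trading $\binom{N-1}{r}^{-1}$ for the Beta integral \eqref{recbin=}), and your first half is essentially the paper's first half: the ``completed'' binomial series, summed geometrically in $r$, produces $\int_0^1 (1-y(1-t))^{-x}/(1-yt(1-t))\,dt$, which matches \eqref{first}. But there is a genuine gap where you yourself flag one: the complementary (truncated) contribution is never computed. The paper does not dispose of it by symmetry or parity; it evaluates it by writing the relevant coefficient extraction as a Cauchy integral over $|z|=2/3$, summing the geometric series in $N$ inside the contour integral (which is why the proof first restricts to $|y|\le 1/3$ and then extends by analyticity), and computing residues at $z=0$ and $z=-yt/(1-yt)$. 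The residue at the second pole is exactly $-(1-yt)^x/(1-yt(1-t))$, and only \emph{after} this is in hand does the substitution $t\mapsto 1-t$ convert it to the $(1-y(1-t))^x$ term. Your parity argument cannot substitute for this computation: writing the answer as $F=G+H$ with $G$ the completed part, oddness of $F$ in $x$ gives $F=\tfrac12\bigl(G(x)-G(-x)\bigr)+\tfrac12\bigl(H(x)-H(-x)\bigr)$, and to reach the claimed $F=G(x)-G(-x)$ you would still need to show that the odd part of $H$ equals the odd part of $G$ --- which is precisely the missing computation.

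There is also an analytic defect in the intermediate manipulations. Your geometric summation $\sum_{r\ge 0}(-1)^r\,\tfrac{t^r}{(1-t)^{r+1}}(1-w)^{r-x}=\tfrac{(1-w)^{-x}}{1-t+t(1-w)}$ requires $|t(1-w)|<1-t$, which fails for $t$ near $1$; similarly, the individual truncated terms carry the non-integrable factor $(1-t)^{-r-1}$, as you note. The paper avoids both problems by performing the (finite, hence unconditional) geometric sum over $r\in[0,N-1]$ \emph{before} summing over $N$, which yields the two-term numerator $(1-t)^N+(-1)^{N+1}\bigl((1-z)t\bigr)^N$ and keeps every subsequent series convergent on the chosen contour. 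To complete your argument you would need either to carry out the residue computation for the truncated part, or to justify rigorously the reindexing and cancellation you allude to; as written, the second half of the identity is asserted rather than proved.
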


Note that the right-hand side of \eqref{k=3,H-Z} is an odd function of $x$, which should be expected, 
since --regardless of the parity of 
$N$-- the number of cycles in $\sigma^{(3)}(N)$ is odd.  In particular, differentiating both
sides at $x=1$, we obtain that  for $y\in [0,1)$,

\begin{multline*}
\sum_{N\ge 1}\frac{y^N}{N}\pr(\sigma^{(3)}(N)\text{ is a cycle})=2\int_0^1\frac{\log\bigl(1-y(1-t)\bigr)^{-1}}{1-yt(1-t)}\,dt\\
=2\sum_{j>0}\frac{1}{j}\int_0^1\frac{\bigl(y(1-t)\bigr)^j}{1-yt(1-t)}\,dt=2\sum_{j>0,h\ge 0}
\frac{y^{j+h}}{j}\int_0^1(1-t)^{j+h}t^h\,dt\\
=2\sum_{j>0,h\ge 0}\frac{y^{j+h}}{j}(j+2h+1)^{-1}\binom{j+2h}{h}^{-1}.
\end{multline*}
So
\begin{equation}\label{p(sigma^3=cycle}
\pr(\sigma^{(3)}(N)\text{ is a cycle})=2N\sum_{h<N}(N-h)^{-1}(N+h+1)^{-1}\binom{N+h}{h}^{-1};
\end{equation}
compare with the equation \eqref{P(cycle)gen} for $k=3$.

\begin{proof} (of Corollary \ref{zagier3})  Since both sides of  \eqref{k=3,H-Z} are analytic for $|y|<1$, it
suffices to prove the identity for $|y|\le 1/3$. From \eqref{k,Exnu(sigma)}, \eqref{recbin=} and
\[
(-1)^N \binom{r-x}{N}= [z^N] (1-z)^{r-x},
\]
we obtain 
\begin{multline*}
N^{-1}\ex\bigl[x^{\nu(\sigma^{(3)}(N))}\bigr]=[z^N] \sum_{r=0}^{N-1}(1-z)^{r-x}\int_0^1(1-t)^{N-1-r}t^r\,dt\\
=[z^N] (1-z)^{-x}\int_0^1(1-t)^{N-1}\sum_{r=0}^{N-1}\left(-\frac{(1-z)t}{1-t}\right)^r\,dt\\
=[z^N] (1-z)^{-x}\int_0^1\frac{(1-t)^N+(-1)^{N+1}\bigl((1-z)t\bigr)^N}{1-tz}\,dt.
\end{multline*}
Next
\[
y^N(1-t)^N [z^N] \,\frac{(1-z)^{x}}{1-tz}=[z^N]\,\frac{\bigl(1-(1-t)yz\bigr)^{-x}}{1-t(1-t)yz};
\]
so
\begin{align}
\int_0^1\sum_{N\ge 1}y^N(1-t)^N [z^N] \,\frac{(1-z)^{-x}}{1-tz}\,dt=&\,\int_0^1\sum_{N\ge 1}[z^N]\,\frac{\bigl(1-(1-t)yz\bigr)^{-x}}{1-t(1-t)yz}\,dt\notag\\
=&\,\int_0^1\frac{\bigl(1-(1-t)y\bigr)^{-x}}{1-t(1-t)y}\,dt  -1.\label{first}
\end{align}
Further, by the Cauchy integral  formula,
\begin{multline*}
y^N[z^N]\frac{(1-z)^{-x}}{1-tz}\bigl((1-z)t\bigr)^N=\frac{1}{2\pi i}\oint_{|z|=2/3}
\frac{(1-z)^{-x}}{z^{N+1}(1-tz)}\bigl(y(1-z)t\bigr)^N\,dz\\
=\frac{1}{2\pi i}\oint_{|z|=2/3}
\frac{(1-z)^{-x}}{z(1-tz)}\left(\frac{y(1-z)t}{z}\right)^N\,dz .
\end{multline*}
On the circle $|z|=2/3$, we have $\bigl|\tfrac{y(1-z)t}{z}\bigr|\le 5|y|/2\le 5/6$; so summing
over $N\ge 1$, 
\begin{align*}
&\sum_{N\ge 1} (-1)^{N+1}y^N[z^N]\frac{(1-z)^{-x}}{1-tz}\bigl((1-z)t\bigr)^N\\
=&\frac{1}{2\pi i}\oint_{|z|=2/3}\frac{(1-z)^{-x}}{z(1-tz)}\frac{\tfrac{y(1-z)t}{z}}{1+\tfrac{y(1-z)t}{z}}\,dz\\
=&\frac{1}{2\pi i}\oint_{|z|=2/3}\frac{(1-z)^{-x}}{z(1-tz)}\cdot\frac{y(1-z)t}{z+y(1-z)t}\,dz.
\end{align*}
For $t>0$, in the circle $|z|\leq 2/3$ the integrand has two poles, both simple, at $z=0$ and 
$z=-\tfrac{yt}{1-yt}$, with respective residues equal $1$ and $-\tfrac{(1-yt)^x}{1-yt(1-t)}$.
Thus
\[
\sum_{N\ge 1} (-1)^{N+1}y^N[z^N]\frac{(1-z)^{-x}}{1-tz}\bigl((1-z)t\bigr)^N=1-\frac{(1-yt)^x}
{1-yt(1-t)}.
\]
Integrating for $t\in [0,1]$ and adding to \eqref{first}, we obtain
\[
\sum_{N\ge 1}\frac{y^N}{N}\ex\bigl[x^{\nu(\sigma^{(3)}(N))}\bigr]=\int_0^1\frac{(1-y(1-t))^{-x} -(1-yt)^x}{1-yt(1-t)}\,dt,
\]
which is equivalent to \eqref{k=3,H-Z}, as $t(1-t)$ is symmetric with respect to $t=1/2$.
\end{proof}

Our final result in this section is a relatively compact, integral, formula for $P_n(\boldsymbol\nu)$,
the probability that $\sigma^{(2)}$ has $\nu_{\ell}$ cycles of length $\ell$, $1\le \ell\le n$, for
the {\it arbitrary\/} $\boldsymbol\nu$, i.e. satisfying the only constraint $\sum_{\ell} \ell\nu_{\ell}=N$.
Since $\sigma^{(2)}$ is even, $P_n(\boldsymbol\nu)=0$ if $\sum_{\ell\text{ even}}\nu_{\ell}$ is odd.

\begin{theorem}\label{gensig2}
\[
P_n(\boldsymbol\nu)=\frac{N}{\prod_{\ell} \ell^{\nu_{\ell}}\nu_{\ell}!}\int_0^1\prod_{\ell\ge 1}
\bigl[t^{\ell}+(-1)^{\ell+1}(1-t)^{\ell}\bigr]^{\nu_{\ell}}\,dt.
\]
\end{theorem}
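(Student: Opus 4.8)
The plan is to reduce everything to the $k=2$ case of the single‑permutation probability \eqref{Psigma(s)} and then to recognize the resulting coefficient‑extraction sum as a single polynomial evaluation. Since $P_{\sigma^{(2)}}(\cdot)$ is a class function, it depends on $s$ only through the cycle type $\boldsymbol\nu$; hence $P_n(\boldsymbol\nu)$ equals $P_{\sigma^{(2)}}(s)$, for any fixed representative $s$ of that type, times the number $N!/\prod_\ell \ell^{\nu_\ell}\nu_\ell!$ of permutations of that type. Putting $k=2$ in \eqref{Psigma(s)} makes the sign $(-1)^{2(\la^1-1)}$ disappear and leaves the single weight $\binom{N-1}{\la_1-1}^{-1}$; after multiplying by the cycle‑type count the factor $N!$ cancels, and I am left with $\frac{1}{\prod_\ell \ell^{\nu_\ell}\nu_\ell!}\sum_{\la_1=1}^N\binom{N-1}{\la_1-1}^{-1}\chi^{\la^*}(s)$.

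Next I would insert the character value \eqref{chila*(nu)} together with the integral representation \eqref{recbin=}, which gives $\binom{N-1}{\la_1-1}^{-1}=N\int_0^1 t^{\la_1-1}(1-t)^{N-\la_1}\,dt$, and interchange the finite sum with the integral. Abbreviating $F(\xi):=\frac{\xi}{1-\xi}\prod_{r\ge1}(1-\xi^r)^{\nu_r}$ and writing $\nu=\sum_r\nu_r$ for the total number of cycles, the inner sum takes the form $\sum_{\la_1=1}^N t^{\la_1-1}(1-t)^{N-\la_1}(-1)^{\la^1+\nu}\,[\xi^{\la_1}]F(\xi)$.

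The step I expect to be the heart of the argument is the observation that $F$ is actually a \emph{polynomial} of degree $N$, not merely a power series. Indeed, because $\sum_r r\nu_r=N$ each factor $1-\xi^r$ contributes a factor $1-\xi$, so $\prod_r(1-\xi^r)^{\nu_r}$ is divisible by $(1-\xi)^{\nu}$ and in particular by $1-\xi$, cancelling the denominator. Since $F$ is then a degree‑$N$ polynomial with $F(0)=0$, the truncated sum $\sum_{\la_1=1}^N z^{\la_1}\,[\xi^{\la_1}]F(\xi)$ reproduces $F(z)$ exactly, with no boundary error. Writing $t^{\la_1-1}(1-t)^{N-\la_1}(-1)^{\la_1}=\frac{(1-t)^N}{t}\left(\frac{-t}{1-t}\right)^{\la_1}$ and pulling out the global sign $(-1)^{\la^1+\nu}=(-1)^{N+1+\nu}(-1)^{\la_1}$ (using $\la^1=N+1-\la_1$), the inner sum collapses to $(-1)^{N+1+\nu}\frac{(1-t)^N}{t}F\!\left(\frac{-t}{1-t}\right)$.

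It then remains to evaluate $F$ at $z=-t/(1-t)$ and to track signs. There $1-z=1/(1-t)$, $z/(1-z)=-t$, and $1-z^r=\bigl((1-t)^r-(-t)^r\bigr)/(1-t)^r$; since $\prod_r(1-t)^{r\nu_r}=(1-t)^N$, the powers of $1-t$ together with the stray factor $t$ cancel, leaving $\frac{(1-t)^N}{t}F\!\left(\frac{-t}{1-t}\right)=-\prod_r\bigl[(1-t)^r-(-t)^r\bigr]^{\nu_r}$. Finally I would use the identity $(1-t)^r-(-t)^r=(-1)^{r+1}\bigl[t^r-(t-1)^r\bigr]$ together with $\sum_r(r+1)\nu_r=N+\nu$; this makes the accumulated signs $(-1)^{N+1+\nu}$, the $-1$ from the evaluation, and the $(-1)^{N+\nu}$ from the identity cancel completely, so the inner sum equals $\prod_r\bigl[t^r-(t-1)^r\bigr]^{\nu_r}=\prod_\ell\bigl[t^\ell+(-1)^{\ell+1}(1-t)^\ell\bigr]^{\nu_\ell}$. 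Reinserting this into $N\int_0^1(\cdots)\,dt$ yields the claimed formula. The only genuine subtlety is the polynomiality of $F$; once that is in hand, everything reduces to a direct, sign‑sensitive computation. As a check, under $t\mapsto 1-t$ the integrand is multiplied by $(-1)^{\sum_{\ell\text{ even}}\nu_\ell}$, so it integrates to zero precisely when that exponent is odd, recovering the parity constraint noted above.
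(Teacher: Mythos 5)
Your proof is correct, and while it starts from the same ingredients as the paper --- the inversion formula \eqref{Psigma(s)} at $k=2$, the character formula \eqref{chila*(nu)}, and the Beta-integral representation \eqref{recbin=} of $\binom{N-1}{\la_1-1}^{-1}$ --- the way you dispatch the coefficient-extraction sum is genuinely different and noticeably cleaner. The paper represents $[\xi^{\la_1}]$ by a Cauchy contour integral, substitutes $\xi=1/\eta$, sums the geometric series under a double integral, and then evaluates by residues; this requires splitting the integrand into a piece regular at $\eta=0$ and a piece that is $O(|\eta|^{-2})$ at infinity, choosing the radius $\rho$ as a function of an auxiliary $\eps$, and a final limit $\eps\to 0$. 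You instead observe that $F(\xi)=\frac{\xi}{1-\xi}\prod_r(1-\xi^r)^{\nu_r}$ is a genuine polynomial of degree $N$ vanishing at $0$ (because $(1-\xi)^{\nu}$ divides $\prod_r(1-\xi^r)^{\nu_r}$ and $\sum_r r\nu_r=N$), so the truncated sum $\sum_{\la_1=1}^{N}z^{\la_1}[\xi^{\la_1}]F(\xi)$ is exactly $F(z)$ with no boundary terms; evaluating at $z=-t/(1-t)$ and tracking signs then gives the integrand directly. This polynomiality observation is precisely the fact that the paper's residue computation is implicitly exploiting (it is why only the pole at $\eta=-(1-t)/t$ contributes), so your argument can be read as a residue-free distillation of the same mechanism. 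What the paper's contour method buys is a template that generalizes to situations where the generating function is not polynomial (as in the $k=3$ computation of Corollary \ref{zagier3}); what your method buys is brevity and the elimination of all analytic bookkeeping. Your closing symmetry check under $t\mapsto 1-t$, recovering the vanishing of $P_n(\boldsymbol\nu)$ when $\sum_{\ell\text{ even}}\nu_\ell$ is odd, is a nice independent confirmation that matches the remark preceding the theorem.
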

\begin{proof} First of all the number of permutations $s$ with cycle parameter $\boldsymbol\nu$ is
$N!/\prod_{\ell} \ell^{\nu_{\ell}}\nu_{\ell}!$. Furthermore, for every such permutation $s$, by
 \eqref{chila*(nu)},  $\la^1+\la_1=N+1$ and $\nu=\sum_{\ell}\nu_{\ell}$, we obtain: setting $r=N-\la_1$, and choosing a positive $\rho$,
\[
\chi^{\la^*}(s)=(-1)^{N+r}\frac{1}{2\pi i}\oint\limits_{|\xi|=\rho}\frac{1}{\xi^{r+1}(1-\xi)}\prod_{\ell\ge 1}
(\xi^{\ell}-1)^{\nu_{\ell}}\,d\xi.
\]
here the circular contour is traversed counter-clockwise, and $\rho$ is arbitrary, as the integrand is
singular at $\xi=0$ only. Substituting $\xi = 1/\eta$, we have
\[
\chi^{\la^*}(s)=(-1)^{N+r}\frac{1}{2\pi i}\oint\limits_{|\eta|=1/\rho}\frac{1}{\eta^{N-r}(\eta-1)}
\prod_{\ell\ge 1}(1-\eta^{\ell})^{\nu_{\ell}}\,d\eta,
\]
with the contour traversed counter-clockwise again. Plugging this formula into the equation
\eqref{Psigma(s)}, and using \eqref{recbin=}, we have 
\begin{multline*}
P_{\sigma^{(2)}}(s)=\frac{(-1)^N}{N!}\sum_{r=0}^{N-1}\binom{N-1}{r}^{-1}\chi^{\la^*(s)}(s)\\
=\frac{(-1)^NN}{N!}\sum_{r=0}^{N-1}\chi^{\la^*}(s)\int\limits_0^1t^r(1-t)^{N-1-r}\,dt\\
=\frac{(-1)^N}{(N-1)!}\,\frac{1}{2\pi i}\oint\limits_{|\eta|=1/\rho}\!\!\left(\int\limits_0^1(1-t)^{N-1}\sum_{r=0}^{N-1}
\left(\!-\frac{t\eta}{1-t}\right)^r\,dt\!\right)\frac{\prod_{\ell}(1-\eta^{\ell})^{\nu_{\ell}}}{\eta^N(\eta-1)}\,d\eta\\
=\frac{(-1)^N}{(N-1)!}\int\limits_0^1\left(\frac{1}{2\pi i}\oint\limits_{|\eta|=1/\rho}
\frac{(1-t)^N-(-t\eta)^N}{1-t+t\eta}\cdot\frac{\prod_{\ell}(1-\eta^{\ell})^{\nu_{\ell}}}{\eta^N(\eta-1)}\,d\eta\right)\,dt
\end{multline*}
Pick $\eps\in (0,1)$ and consider $t\le 1-\eps$. Choose $\rho>(1-\eps)/\eps$. For this $\rho$,
the internal integrand has two  singular points, $\eta=0$ and $\eta=-(1-t)/t$,  respectively within and without the integration contour. Crucially, 
\[
\frac{-(-t\eta)^N}{1-t+t\eta}\cdot\frac{\prod_{\ell}(1-\eta^{\ell})^{\nu_{\ell}}}{\eta^N(\eta-1)}
=\frac{-(-t)^N}{1-t+t\eta}\cdot\frac{\prod_{\ell}(1-\eta^{\ell})^{\nu_{\ell}}}{\eta-1}
\]
has no singularity at $\eta=0$,  and for $t>0$
\[
\frac{(1-t)^N}{1-t+t\eta}\cdot\frac{\prod_{\ell}(1-\eta^{\ell})^{\nu_{\ell}}}{\eta^N(\eta-1)}=O(|\eta|^{-2}),
\quad |\eta|\to\infty,
\]
as $\sum_{\ell}\ell\nu_{\ell}=N$. So, by the residue theorem,
the internal integral equals 
\begin{multline*}
\frac{1}{2\pi i}\oint\limits_{|\eta|=1/\rho}
\frac{(1-t)^N}{1-t+t\eta}\cdot\frac{\prod_{\ell}(1-\eta^{\ell})^{\nu_{\ell}}}{\eta^N(\eta-1)}\,d\eta\\
=-t^{-1}\left.(1-t)^N\cdot\frac{\prod_{\ell}(1-\eta^{\ell})^{\nu_{\ell}}}{\eta^N(\eta-1)}\right|_{\eta=-\tfrac{1-t}{t}}\\
=(-1)^N\prod_{\ell\ge 1}\bigl[t^{\ell}+(-1)^{\ell+1}(1-t)^{\ell}\bigr]^{\nu_{\ell}},
\end{multline*}
for all $0< t\le 1-\eps$. Letting $\eps\to 0$, we obtain
\[
P_{\sigma^{(2)}}(s)=\frac{1}{(N-1)!}\int\limits_0^1\prod_{\ell\ge 1}\bigl[t^{\ell}+(-1)^{\ell+1}(1-t)^{\ell}\bigr]^{\nu_{\ell}}.
\]
Multiplying the result by $N!/\prod_{\ell}\ell^{\nu_{\ell}}\nu_{\ell}!$ we complete the proof.
\end{proof}
\begin{corollary}\label{P(N,r)=}  Let $P_{N,r}$ denote the probability that all cycles of $\sigma^{(2)}$ are of the same length $r\ge 2$, i.e. $N\equiv 0\,(\text{mod }r)$ and $\nu_r=N/r$. (So $P_{N,r}=0$ if $r$ is
even and $N\not\equiv 0\,(\text{mod }2r)$.) Then
\begin{equation}\label{P(N,r)}
\begin{aligned}
P_{N,r}&=\frac{N}{r^{N/r}(N/r)!}\int_0^1\bigl[t^r+(-1)^{r+1}(1-t)^r\bigr]^{N/r}\,dt\\
&=\frac{N}{(N+1)r^{N/r}(N/r)!}\sum_{0\le j\le N\atop j\equiv 0(\text{mod }r)}
\!\!\!\!(-1)^{j(r+1)/r}\frac{\binom{N/r}{j/r}}{\binom{N}{j}}\\
\end{aligned}
\end{equation}
In particular,
\begin{equation}\label{P(N,2),P(N,3)}
P_{N,2}=\frac{N}{2^{N/2}(N/2+1)!},\quad P_{N,3}=\frac{N}{(N/3)!\,(12)^{N/3}}\sum_{j=0}^{N/3}
\binom{N/3}{j}\frac{3^j}{2j+1}.
\end{equation}
\end{corollary}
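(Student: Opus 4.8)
The plan is to derive both displayed identities directly from Theorem \ref{gensig2}, since $P_{N,r}$ is nothing but $P_n(\boldsymbol\nu)$ evaluated at the single cycle-type configuration $\nu_r = N/r$ and $\nu_\ell = 0$ for $\ell \neq r$. First I would substitute this $\boldsymbol\nu$ into the formula of Theorem \ref{gensig2}: the normalizing product $\prod_\ell \ell^{\nu_\ell}\nu_\ell!$ collapses to $r^{N/r}(N/r)!$, and the product inside the integral reduces to the single factor $[t^r + (-1)^{r+1}(1-t)^r]^{N/r}$. This yields the first line of \eqref{P(N,r)} with no further work.

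To reach the second line I would expand the integrand by the binomial theorem. Writing $m = N/r$,
\[
\bigl[t^r+(-1)^{r+1}(1-t)^r\bigr]^{m}=\sum_{i=0}^m\binom{m}{i}(-1)^{(r+1)i}t^{r(m-i)}(1-t)^{ri},
\]
and integrating term by term with the Beta integral $\int_0^1 t^a(1-t)^b\,dt = \frac{a!\,b!}{(a+b+1)!}$, applied with $a = r(m-i)$ and $b = ri$ (so $a+b = N$), assigns each term the value $\frac{(N-ri)!\,(ri)!}{(N+1)!} = \frac{1}{(N+1)\binom{N}{ri}}$. Reindexing by $j = ri$, so that $j$ ranges over the multiples of $r$ in $[0,N]$ and $i = j/r$, converts the sum into $\tfrac{1}{N+1}\sum_{0\le j\le N,\,r\mid j}(-1)^{j(r+1)/r}\binom{N/r}{j/r}/\binom{N}{j}$; multiplying by the prefactor $N/(r^{N/r}(N/r)!)$ produces the second line exactly.

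For the two explicit cases I would simplify the integrand before integrating. When $r=2$ the sign is $(-1)^{r+1}=-1$ and $t^2-(1-t)^2 = 2t-1$, so the integral is $\int_0^1(2t-1)^{N/2}\,dt = \tfrac12\int_{-1}^1 u^{N/2}\,du$, which equals $1/(N/2+1)$ when $N/2$ is even and $0$ otherwise; the nonzero case gives $P_{N,2} = N/(2^{N/2}(N/2+1)!)$, and the vanishing case matches the parity restriction already recorded in the statement. When $r=3$ the sign is $+1$ and $t^3+(1-t)^3 = 3t^2-3t+1 = \tfrac14\bigl(12(t-\tfrac12)^2+1\bigr)$; after the substitution $u = t-\tfrac12$ I would expand $(12u^2+1)^{N/3}$ binomially and use $\int_{-1/2}^{1/2}u^{2j}\,du = \frac{1}{(2j+1)4^j}$, then collect $3^{N/3}4^{N/3} = 12^{N/3}$ to obtain $P_{N,3}$. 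The computation is entirely mechanical once Theorem \ref{gensig2} is granted; the only place demanding care is the bookkeeping of the sign $(-1)^{j(r+1)/r}$ and the reindexing $j = ri$ in the general sum, together with the clean completion of the square in the $r=3$ integral.
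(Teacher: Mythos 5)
Your proposal is correct and follows the paper's own route exactly: specialize Theorem \ref{gensig2} to the cycle type $\nu_r=N/r$, obtain the second line by binomial expansion and term-by-term Beta integration, and get the $r=2,3$ cases from the simplifications $t^2-(1-t)^2=2t-1$ and $t^3+(1-t)^3=\tfrac14+3(t-\tfrac12)^2$. All the computational details (the sign bookkeeping, the reindexing $j=ri$, and the collection of $3^{N/3}4^{N/3}=12^{N/3}$) check out.
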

\noindent Derivation of \eqref{P(N,2),P(N,3)} seems to indicate that the second line identity
in \eqref{P(N,r)} is the preferred expression for the probability $P_{N,r}$ when $r>3$.

\begin{proof} 
The second identity in \eqref{P(N,r)} follows from Theorem \ref{gensig2} via binomial
formula for the integrand $\bigl[t^r+(-1)^{r+1}(1-t)^{r}\bigr]^{N/r}$ and term-by-term integration.
The formulas \eqref{P(N,2),P(N,3)} follow immediately by integration from the first identity in
\eqref{P(N,r)},  as
\begin{align*}
&t^2-(1-t)^2 =2t-1,
&t^3+(1-t)^3= 1/4 +3u^2,\quad u=t-1/2.
\end{align*}
\end{proof}
\begin{corollary}\label{inv} For all positive integers $N$, we have
\[
\pr(\sigma^{(2)}\text{ is an involution})=N\!\!\!\sum_{\nu_1+2\nu_2=N\atop \nu_2\text{ even}}\frac{1}{\nu_1!\,2^{\nu_2}(\nu_2+1)!}.
\]
\end{corollary}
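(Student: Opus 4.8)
The plan is to apply Theorem \ref{gensig2} directly. A permutation is an involution precisely when all of its cycles have length $1$ or $2$, so $\sigma^{(2)}$ is an involution exactly when $\nu_\ell=0$ for every $\ell\ge 3$. Consequently the probability in question is the sum of $P_n(\boldsymbol\nu)$ over all admissible cycle vectors $\boldsymbol\nu$ supported on $\{1,2\}$, equivalently over all pairs $(\nu_1,\nu_2)$ satisfying $\nu_1+2\nu_2=N$.

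First I would specialize the integrand of Theorem \ref{gensig2} to this support. The factor for $\ell=1$ is $t+(1-t)=1$, the factor for $\ell=2$ is $t^2-(1-t)^2=2t-1$, and there are no factors with $\ell\ge 3$. Since $\prod_\ell \ell^{\nu_\ell}\nu_\ell!=\nu_1!\,2^{\nu_2}\nu_2!$ on this support, for each such $\boldsymbol\nu$ we obtain
\[
P_n(\boldsymbol\nu)=\frac{N}{\nu_1!\,2^{\nu_2}\nu_2!}\int_0^1 (2t-1)^{\nu_2}\,dt.
\]
Next I would evaluate the integral by the substitution $u=2t-1$, which turns it into $\tfrac12\int_{-1}^1 u^{\nu_2}\,du$. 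This integral vanishes when $\nu_2$ is odd and equals $(\nu_2+1)^{-1}$ when $\nu_2$ is even. Absorbing the factor $\nu_2!$ then produces exactly $\tfrac{1}{2^{\nu_2}(\nu_2+1)!}$, and the sum collapses to the even values of $\nu_2$. Summing over all admissible $(\nu_1,\nu_2)$ yields the stated identity.

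I do not expect a genuine obstacle in this argument: once the integrand is specialized, everything reduces to an elementary integral. The only point requiring care is the parity bookkeeping, namely recording the vanishing of the integral for odd $\nu_2$ as the summation condition ``$\nu_2$ even'' rather than letting it disappear silently. This vanishing is in fact the expected consistency check, since $\sigma^{(2)}$ is an even permutation and therefore cannot contain an odd number of transpositions.
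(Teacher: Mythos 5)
Your proof is correct and follows exactly the route the paper intends: the corollary is an immediate specialization of Theorem \ref{gensig2} to cycle vectors supported on $\{1,2\}$, using the same evaluations $t+(1-t)=1$ and $t^2-(1-t)^2=2t-1$ that the paper employs for \eqref{P(N,2),P(N,3)}. The parity bookkeeping (vanishing of $\int_0^1(2t-1)^{\nu_2}\,dt$ for odd $\nu_2$) is handled properly and matches the summation condition in the statement.
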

The identities equivalent to \eqref{P(N,2),P(N,3)}  were proved in Doignon and Labarre \cite{doignon} by using the sum-type formulas for the total number of ways to represent
a maximal cycle as a product of a maximal cycle and a permutation from a given conjugacy
class, see Goupil \cite{Gou}, Stanley \cite{Sta1}, Goupil and Schaeffer \cite{GouSch}.  The sequence $(N-1)! P_{N,2}$ is listed by
Sloane as A035319, and known as the counts of certain rooted maps, see Walsh and Lehman \cite{walsh}. The sequence 
$(N-1)!P_{N,3}$ is listed in Sloane as A178217.

\section{Probability that  the occupancy numbers  of the cycles of $\sigma$ by the
elements of $[\ell]$ belong to a given set}\label{A}

In the section title and elsewhere below $\sigma$ is $\sigma^{(k)}$, the product of $k$
random maximal cycles.  Let $A\subseteq \Bbb Z_{\ge 0}$ be given. Introduce $p_A(N,\ell;k)$, the probability that the number
of elements of $[\ell]$ in each cycle of $\sigma$ belongs
to the set $A$. 

The examples include: (1) $A_1=\Bbb Z_{>0}$; each cycle must contain at least
one element of $[\ell]$; (2) $A_2=\{0,\ell\}$; one of the cycles of $\sigma$ contains the whole
set $[\ell]$; (3) $A_3=\{0,1\}$; each element of $[\ell]$ belongs to a distinct cycle of $\sigma$.
The case of $k=2$, $\ell=2$ and $A=\{0,2\}$ or $A=\{0,1\}$ was solved by Stanley \cite{Stanley2}.
Very recently Bernardi et al. \cite{Bernardi} solved the case $k=2$, $A=\{0,1\}$ for $\ell \ge 2$.
In fact they solved a general problem of separation probability for $t$ disjoint sets ${\cal S}_1,\dots, {\cal S}_{t}$.

To evaluate  $p_A(N,\ell;k)$, consider first $Q_A(\vec{\nu},\ell)$, the total number of permutations $s$ of $[N]$, with 
$\vec{\nu}(s)=\{\nu_r(s)\}=\{\nu_r\}=\vec{\nu}$, such that the number of
elements of $[\ell]$ in every cycle is an element of $A$. The reason we need $Q_A(\vec{\nu},\ell)$ is that the key formula \eqref{chila*(nu)}
expresses $\chi^{\la*}(s)$ through the cycle counts $\nu_r(s)$, $r\ge 1$.

To evaluate $Q_A(\vec{\nu},\ell)$, introduce the non-negative integers $a_{r,j}$, $b_{r,j}$ that
stand for the generic numbers of elements from $[\ell]$ and $[N]\setminus [\ell]$ in the $j$-th cycle of length $r$, $(j\le \nu_r)$. 

\begin{theorem}
For all $\ell\geq 2$, the identity \begin{equation} Q_A(\vec{\nu},\ell) =
(N-\ell)!\,\ell!\,\, [w^{\ell}]\prod_r\frac{1}{\nu_r!}\left(\frac{\sum_{a\in A}\binom{r}{a}w^a}{r}\right)^{\nu_r}
\end{equation} holds. 
\end{theorem}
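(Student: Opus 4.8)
The plan is to count $Q_A(\vec\nu,\ell)$ directly, organizing the count around the per-cycle occupancy numbers $a_{r,j}$ and $b_{r,j}=r-a_{r,j}$ already introduced. First I would fix an \emph{admissible} assignment of these numbers, namely one with $a_{r,j}\in A$ for every cycle and with $\sum_{r,j}a_{r,j}=\ell$; note that because $\sum_{r,j}r=N$ is forced by the cycle type $\vec\nu$, the complementary constraint $\sum_{r,j}b_{r,j}=N-\ell$ holds automatically, so no separate condition on the $b_{r,j}$ is needed.

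For a fixed admissible assignment I would count the permutations realizing it. Temporarily regarding the $\nu_r$ cycles of each common length $r$ as ordered (labeled $j=1,\dots,\nu_r$), the construction factors into three independent choices: distribute the $\ell$ elements of $[\ell]$ among the cycles with prescribed sizes $a_{r,j}$, a multinomial $\ell!/\prod_{r,j}a_{r,j}!$; distribute the $N-\ell$ elements of $[N]\setminus[\ell]$ with prescribed sizes $b_{r,j}$, giving $(N-\ell)!/\prod_{r,j}b_{r,j}!$; and impose a cyclic order on the $r$ elements of each cycle, contributing $(r-1)!$ per cycle. Multiplying and using $\frac{(r-1)!}{a_{r,j}!\,b_{r,j}!}=\frac1r\binom{r}{a_{r,j}}$, the number of ordered realizations of the assignment collapses to $\ell!\,(N-\ell)!\prod_{r,j}\frac1r\binom{r}{a_{r,j}}$.

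Next I would pass from ordered to genuine cycle structures. Each permutation of cycle type $\vec\nu$ arises from exactly $\prod_r\nu_r!$ of the ordered structures (the relabelings of the equal-length cycles), so dividing by $\prod_r\nu_r!$ and summing over all admissible assignments yields
\[
Q_A(\vec\nu,\ell)=\frac{\ell!\,(N-\ell)!}{\prod_r\nu_r!}\sum_{\substack{a_{r,j}\in A\\ \sum_{r,j}a_{r,j}=\ell}}\ \prod_{r,j}\frac1r\binom{r}{a_{r,j}}.
\]
Finally I would read the inner sum as a coefficient extraction: introducing a formal variable $w$ that records the total number $\sum_{r,j}a_{r,j}$ of marked elements, the constrained sum equals $[w^\ell]\prod_{r,j}\bigl(\frac1r\sum_{a\in A}\binom{r}{a}w^a\bigr)$; absorbing the prefactor $1/\prod_r\nu_r!$ and grouping the $\nu_r$ identical factors attached to each length $r$ turns this into $[w^\ell]\prod_r\frac{1}{\nu_r!}\bigl(\frac{\sum_{a\in A}\binom{r}{a}w^a}{r}\bigr)^{\nu_r}$, the asserted formula.

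The bulk of the argument is routine, so I expect the only real care to be needed in the symmetry bookkeeping: justifying the exact factor $\prod_r\nu_r!$ relating ordered and unordered structures, and checking that fixing the marked counts $a_{r,j}$ does not secretly distinguish cycles of equal length in a way that would spoil this factor. Equivalently, one can phrase the whole computation as a bivariate exponential generating function in which the per-cycle weight $\frac1r\sum_{a\in A}\binom{r}{a}w^a x^{r-a}$ is homogeneous of degree $r$, so the product over all cycles is homogeneous of degree $N$ and extracting $[x^{N-\ell}w^\ell]$ reduces to setting $x=1$ and extracting $[w^\ell]$; this is the same identity packaged so that the $1/\nu_r!$ factors appear automatically from the set (exponential) construction, which is the cleanest way to make the symmetry step transparent.
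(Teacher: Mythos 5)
Your proof is correct and is essentially the paper's own argument: the same per-assignment count via multinomials for $[\ell]$ and $[N]\setminus[\ell]$, the factor $(r-1)!$ per cycle, division by $\prod_r\nu_r!$, the simplification $(r-1)!/(a_{r,j}!\,b_{r,j}!)=\tfrac1r\binom{r}{a_{r,j}}$, and the packaging of the constrained sum as $[w^\ell]$ of a product. The only difference is cosmetic (your closing remark about the bivariate exponential generating function), so there is nothing further to flag.
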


\begin{proof} For $\bold a$, $\bold b$ to be admissible we must have
\begin{align}
\!\!a_{r,j}&+b_{r,j}=r,\label{a+b=r}\\
 &a_{r,j}\in A, \label{ainA}\\
\sum_{r,j}a_{r,j}&=\ell,\quad \sum_{r,j}b_{r,j}=N-\ell.\label{sumarj=ell,sumbrj=N-ell}
\end{align}
Therefore
\begin{equation}\label{p(nu,ell)}
\begin{aligned}
Q_A(\vec{\nu},\ell)&=(N-\ell)!\,\ell!\sum_{\bold a,\,\bold b\text{ meet }\atop
\eqref{a+b=r}, \eqref{ainA},\eqref{sumarj=ell,sumbrj=N-ell}}\prod_r\frac{((r-1)!)^{\nu_r}}{\nu_r!}\prod_{j\le r}\frac{1}{a_{r,j}!\,b_{r,j}!}\\
&=(N-\ell)!\,\ell!\,[w^{\ell}]\prod_r\frac{1}{r^{\nu_r}\nu_r!}\,\prod_{j\le\nu_r} \sum_{a_{r,j}\in A}
\binom{r}{a_{r,j}} w^{a_{r,j}}\\
&=(N-\ell)!\,\ell!\,\, [w^{\ell}]\prod_r\frac{1}{\nu_r!}\left(\frac{\sum_{a\in A}\binom{r}{a}w^a}{r}\right)^{\nu_r}.
\end{aligned}
\end{equation}
\end{proof}

So, using \eqref{chila*(nu)} and $\nu=\sum_r\nu_r$, we conclude that
\begin{multline*}
\sum_{s:\,\vec{\nu}(s)=\vec{\nu}}\chi^{\la^*}(s)=(-1)^{\la^1}(N-\ell)!\,\ell!\\
\times 
[\xi^{\la_1}w^{\ell}]\,\,\frac{\xi}{1-\xi}\,\prod_r\frac{1}{\nu_r!}\left(-\frac{(1-\xi^r)\bigl(\sum_{a\in A}\binom{r}{a}w^a\bigr)}{r}\right)^{\nu_r}.
\end{multline*}
Call a permutation $s$ of $[N]$ admissible if the numbers of elements from $[\ell]$ 
 in each cycle of $s$ meet the constraint \eqref{ainA}. The above identity implies
 \begin{multline}\label{sumcharadmsss1}
\sum_{s\text{ admissible}}\chi^{\la^*}(s)=(-1)^{\la^1}(N-\ell)!\,\ell!\\
\times 
[\xi^{\la_1}w^{\ell}]\,\,\frac{\xi}{1-\xi}\,\sum_{\vec\nu:\atop 1\nu_1+2\nu_2+\cdots =N}\prod_r\frac{1}{\nu_r!}\left(-\frac{(1-\xi^r)\bigl(\sum_{a\in A}\binom{r}{a}w^a\bigr)}{r}\right)^{\nu_r}.
\end{multline}
The expression in the second line of  \eqref{sumcharadmsss1} equals
\begin{align}
&[\xi^{\la_1}w^{\ell}x^N] \frac{\xi}{1-\xi}\,\sum_{\vec\nu\,\ge \bold 0}\prod_r\frac{(x^r)^{\nu_r}}{\nu_r!}\left(-\frac{(1-\xi^r)\bigl(\sum_{a\in A}\binom{r}{a}w^a\bigr)}{r}\right)^{\nu_r}\notag\\
=&[\xi^{\la_1}w^{\ell}x^N] \frac{\xi}{1-\xi}\,\prod_r\sum_{\nu_r\ge 0}\frac{1}{\nu_r!}
\left(-\frac{x^r(1-\xi^r)\bigl(\sum_{a\in A}\binom{r}{a}w^a\bigr)}{r}\right)^{\nu_r}\notag\\
=&[\xi^{\la_1}w^{\ell}x^N] \frac{\xi}{1-\xi}\,\prod_r\exp\left(-\frac{x^r(1-\xi^r)\bigl(\sum_{a\in A}\binom{r}{a}w^a\bigr)}{r}\right)\notag\\
=&[\xi^{\la_1}w^{\ell}x^N] \frac{\xi}{1-\xi}\,\exp\left(-\sum_{r\ge 1}\frac{x^r(1-\xi^r)\bigl(\sum_{a\in A}\binom{r}{a}w^a\bigr)}{r}\right).\label{xila1wellxN}
\end{align}
\subsection{Probability that each cycle of $\sigma$ contains at least one element of $[\ell]$}\label{A=A_1} In this case $A=A_1=\Bbb Z_{>0}$. Therefore 
\[
\sum_{a\in A}\binom{r}{a}w^a=(1+w)^r-1.
\]

In this section, we prove the following result and discuss some of its special cases. 
\begin{theorem}\label{formulaforA1}
For all positive integers $\ell$ and $k$, the equality \begin{equation} \label{equationforA1} 
p_{A_1}(N,\ell;k) = \binom{N}{\ell}^{-1}\sum_{\la_1=\ell}^N(-1)^{(k-1)(N-\la_1)}
\binom{N-1}{N-\la_1}^{-k+1}\binom{\la_1-1}{\ell-1} \end{equation}
holds.
\end{theorem}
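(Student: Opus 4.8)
The plan is to compute $p_{A_1}(N,\ell;k)$ by combining the Fourier inversion machinery of the Preliminaries with the admissible-character sum $\eqref{sumcharadmsss1}$, specialized to $A=A_1=\mathbb{Z}_{>0}$, where $\sum_{a\in A}\binom{r}{a}w^a = (1+w)^r-1$. Starting from $\eqref{Psigma(s)}$, the probability $p_{A_1}(N,\ell;k)$ is obtained by summing $P_\sigma(s)$ over all admissible $s$; since $P_\sigma(s)$ depends on $s$ only through its cycle type via $\chi^{\la^*}(s)$, this sum collapses to $\tfrac{1}{N!}\sum_{\la^*}(-1)^{k(\la^1-1)}\binom{N-1}{\la_1-1}^{-k+1}\sum_{s\text{ admissible}}\chi^{\la^*}(s)$. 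The heart of the computation is therefore to evaluate the inner character sum using $\eqref{xila1wellxN}$.

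First I would substitute $(1+w)^r-1$ into the generating-function expression $\eqref{xila1wellxN}$, so that the exponent becomes $-\sum_{r\ge 1}\tfrac{x^r(1-\xi^r)\bigl((1+w)^r-1\bigr)}{r}$. The key simplification is that this sum telescopes into a product of four logarithmic pieces: $\sum_r \tfrac{(x\alpha)^r}{r} = -\log(1-x\alpha)$, applied with $\alpha \in \{(1+w),\,\xi(1+w),\,1,\,\xi\}$ carrying the appropriate signs. Exponentiating, the whole generating function reduces to a rational expression in $x$, $\xi$, $w$, roughly of the form $\tfrac{\xi}{1-\xi}\cdot\tfrac{(1-x(1+w))\,(1-x\xi)}{(1-x)\,(1-x\xi(1+w))}$. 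I would then extract the coefficient $[\xi^{\la_1}w^\ell x^N]$ of this closed form; because the dependence on $x$ and $\xi$ appears through simple geometric-series factors, the $x^N$ and $\xi^{\la_1}$ extractions should each produce a single binomial coefficient, and the $w^\ell$ extraction another, collapsing everything to a product like $\binom{\la_1-1}{\ell-1}$ up to sign.

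After obtaining $\sum_{s\text{ admissible}}\chi^{\la^*}(s)$ in closed form, I would plug it back into the inversion formula above, use $\la^1 = N+1-\la_1$ to rewrite the sign $(-1)^{k(\la^1-1)} = (-1)^{k(N-\la_1)}$, and combine it with the factor $(-1)^{\la^1}$ appearing in $\eqref{sumcharadmsss1}$ to produce the single exponent $(k-1)(N-\la_1)$ in the claimed formula. The $(N-\ell)!\,\ell!$ prefactor from $Q_{A_1}$, together with the $1/N!$ from inversion, should assemble into the $\binom{N}{\ell}^{-1}$ in front, and the summation range $\la_1 = \ell,\dots,N$ emerges naturally since $\binom{\la_1-1}{\ell-1}$ vanishes for $\la_1 < \ell$.

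The main obstacle I anticipate is the bookkeeping in the coefficient extraction from the rational generating function: tracking the signs through the four logarithmic terms and verifying that the $x^N$, $\xi^{\la_1}$, and $w^\ell$ extractions interact cleanly rather than producing a double or triple sum that then needs a Vandermonde-type identity to collapse. I would manage this by extracting $w^\ell$ first (isolating the $(1+w)$ factors via the binomial theorem), then $x^N$ and $\xi^{\la_1}$ from the resulting geometric factors, and checking the final sign and range against the small cases $\ell=1$ (where the constraint is vacuous and the answer must be $1$) and $k=2$ as sanity checks before declaring the identity.
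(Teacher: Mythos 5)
Your proposal follows essentially the same route as the paper's own proof: the same specialization $(1+w)^r-1$ into \eqref{xila1wellxN}, the same logarithmic telescoping to the rational function $\tfrac{\xi}{1-\xi}\cdot\tfrac{(1-x(1+w))(1-\xi x)}{(1-\xi x(1+w))(1-x)}$, the same order of coefficient extraction ($w^\ell$ first, then $x$ and $\xi$), and the same final assembly via \eqref{Psigma(s)} with $\la^1=N+1-\la_1$. The closed form you predict for the character sum, $(-1)^{\la^1-1}(N-\ell)!\,\ell!\binom{\la_1-1}{\la_1-\ell}$, is exactly what the paper obtains, so carrying out your plan reproduces the published argument.
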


\begin{proof} Using \eqref{sumcharadmsss1}, \eqref{xila1wellxN} and $\sum_{j\ge 1} z^j/j=-\log(1-z)$,
$|z|<1$, we obtain 
\begin{equation}\label{sumadmisss}
\begin{aligned}
\sum_{s\text{ admissible}}\chi^{\la^*}(s)=&(-1)^{\la^1}(N-\ell)!\,\ell!\\
&\times [\xi^{\la_1}w^{\ell}x^N]\,\,\frac{\xi}{1-\xi}\,\frac{\bigl(1-x(1+w)\bigr)(1-\xi x)}{\bigl(1-\xi x(1+w)\bigr)(1-x)}.
\end{aligned}
\end{equation}
Let us simplify this formula. Write
\begin{align*}
[w^{\ell}]\frac{1-x(1+w)}{1-\xi x(1+w)}=&\,\frac{\xi-1}{\xi}[w^{\ell}] \frac{1}{1-\xi x(1+w)}\\
=&\frac{1-\xi}{\xi^2x}\,[w^{\ell}]\left(w-\frac{1-\xi x}{\xi x}\right)^{-1}\\
=&\frac{1-\xi}{\xi^2 x}\binom{-1}{\ell}(-1)^{-1-\ell}\left(\frac{1-\xi x}{\xi x}\right)^{-1-\ell}\\
=&-\frac{1-\xi}{\xi^2 x}\left(\frac{\xi x}{1-\xi x}\right)^{1+\ell}.
\end{align*}
Therefore 
\begin{multline*}
[\xi^{\la_1}w^{\ell}x^N]\,\,\frac{\xi}{1-\xi}\,\frac{\bigl(1-x(1+w)\bigr)(1-\xi x)}{\bigl(1-\xi x(1+w)\bigr)(1-x)}
=  -[\xi^{\la_1}x^N]\,(1-x)^{-1}\left(\frac{\xi x}{1-\xi x}\right)^{\ell}\\
 =  -[x^N]\, x^{\la_1}(1-x)^{-1}\cdot [y^{\la_1}]\left(\frac{y}{1-y}\right)^{\ell} \\
=  -[y^{\la_1-k}](1-y)^{-\ell}= -\binom{\la_1-1}{\la_1-\ell},
\end{multline*}
where  $\binom{a}{b}=0$ for $b<0$. So \eqref{sumadmisss} becomes
\begin{equation}\label{sumaddexpl}
\sum_{s\text{ admissible}}\chi^{\la^*}(s)=(-1)^{\la^1-1}(N-\ell)!\,\ell!\binom{\la_1-1}{\la_1-\ell}.
\end{equation}
Combining \eqref{sumaddexpl} and \eqref{Psigma(s)} we conclude that 
\begin{equation}\label{Pr(keverywhere)}
\begin{aligned}
p_{A_1}(N,\ell;k)
=&\frac{1}{N!}\sum_{\la^*}(-1)^{k(\la^1-1)}\binom{N-1}{\la_1-1}^{-k+1}\sum_{s\text{ admissible}}\chi^{\la^*}(s)\\
=&\binom{N}{\ell}^{-1}\sum_{\la_1=\ell}^N(-1)^{(k-1)(N-\la_1)}\binom{N-1}{N-\la_1}^{-k+1}\binom{\la_1-1}{\ell-1},
\end{aligned}
\end{equation}
\si
which was to be proved. 
\end{proof}

Note that as $N\to\infty$,  the dominant contribution to the right-hand side in \eqref{Pr(keverywhere)} comes from 
$\la_1=\ell$ and $\la_1=N$, so that $p_{A_1}(N,\ell;k)=\ell/N +O(N^{-2\ell+1})$; the formula
is useful for $\ell>1$. We remark that $\ell/N$ is the probability that every cycle of the uniformly
random permutation of $[N]$ contains at least one element of $[\ell]$; see Lov\'asz \cite{Lovasz},
Section 3, Exercise 6.

\begin{corollary} For all positive integers $\ell$, the identity 
\begin{equation} \label{A1k2} 
p_{A_1}(N,\ell;2)=(-1)^{N+\ell}N\binom{N}{\ell}^{-1}\sum_{i=0}^{N-\ell}(-1)^i\binom{N}{i}\frac{1}{i+\ell}
\end{equation}
holds.
\end{corollary}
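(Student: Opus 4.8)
The plan is to start from the $k=2$ specialization of \eqref{equationforA1} and reduce the claim to a pure binomial-sum identity, which I will then establish by the beta-integral representation \eqref{recbin=}. Since both sides of the Corollary carry the common factor $\binom{N}{\ell}^{-1}$, after reindexing by $r=N-\la_1$ it suffices to show
\[
\sum_{r=0}^{N-\ell}(-1)^r\binom{N-1}{r}^{-1}\binom{N-r-1}{\ell-1}
=(-1)^{N+\ell}N\sum_{i=0}^{N-\ell}(-1)^i\binom{N}{i}\frac{1}{i+\ell}.
\]
First I would apply \eqref{recbin=} in the form $\binom{N-1}{r}^{-1}=N\int_0^1 t^r(1-t)^{N-1-r}\,dt$ and pull the finite $r$-sum inside the integral, turning the left-hand side into $N\int_0^1 f(t)\,dt$ with $f(t)=(1-t)^{N-1}\sum_{r=0}^{N-\ell}(-1)^r\binom{N-r-1}{\ell-1}\bigl(t/(1-t)\bigr)^r$.

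Next I would resum the inner series in closed form using the generating-function identity $\binom{N-r-1}{\ell-1}=[y^{N-\ell}]\,y^r(1-y)^{-\ell}$, valid for all $r\ge0$ because it automatically annihilates the terms with $r>N-\ell$. This lets the truncated sum become a full geometric series, which collapses to
\[
f(t)=(1-t)^N\,[y^{N-\ell}]\frac{1}{(1-y)^\ell\bigl(1-t(1-y)\bigr)}.
\]
Integrating in $t$ with the substitution $u=1-t$, so that $1-t(1-y)=y+u(1-y)$, yields
\[
\int_0^1 f(t)\,dt=[y^{N-\ell}]\frac{1}{(1-y)^\ell}\int_0^1\frac{u^N}{y+u(1-y)}\,du.
\]

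The remaining $u$-integral I would evaluate by dividing $u^N$ by $u+a$, where $a=y/(1-y)$: this produces the polynomial part $\sum_{j=0}^{N-1}(-a)^j/(N-j)$ together with a remainder proportional to $(-a)^N\log(1/a)$. \textbf{Here is the one delicate point:} that logarithmic remainder is non-analytic in $y$ at the origin, but since $a^N=(y/(1-y))^N$ contributes only to powers $y^N$ and higher, it cannot affect the coefficient $[y^{N-\ell}]$ when $\ell\ge1$, so it may be discarded. What survives is a sum of terms $\frac{(-1)^j}{N-j}\,y^j(1-y)^{-(j+\ell+1)}$, whose $[y^{N-\ell}]$ coefficient is $\frac{(-1)^j}{N-j}\binom{N}{j+\ell}$ by the standard expansion $[y^n](1-y)^{-d}=\binom{n+d-1}{d-1}$. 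Finally, reindexing by $i=N-\ell-j$ converts $\sum_j\frac{(-1)^j}{N-j}\binom{N}{j+\ell}$ into $(-1)^{N+\ell}\sum_i(-1)^i\binom{N}{i}\frac{1}{i+\ell}$, and multiplying by $N\binom{N}{\ell}^{-1}$ gives exactly the asserted formula. The only real obstacle is the careful justification that the logarithmic remainder is irrelevant to the coefficient being extracted; the rest is bookkeeping.
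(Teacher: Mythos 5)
Your argument is correct and reaches the stated identity, but it is a genuinely different route from the paper's. The paper first rewrites the $k=2$ case of \eqref{equationforA1} as $(-1)^{N-1}\binom{N}{\ell}^{-1}S_{N-1,0,\ell-1}$ with $S_{n,a,b}$ as in \eqref{Snabdef}, then evaluates the two-parameter family $S_{n,a,b}$ by introducing $\mathcal S_{n,a,b}(x)=\sum_r(-1)^r x^{r-a}/\binom{n}{r}$, applying the beta integral \eqref{int}, differentiating $b$ times under the integral sign at $x=1$ to get \eqref{sum(n,a,b)=expl}, and finally invoking an auxiliary identity of Sury--Wang--Zhao to pass to the form \eqref{sum(n,a,b)=expl,2} that yields \eqref{A1k2}. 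You use the same beta-integral kernel \eqref{recbin=} but encode the numerator binomial as $\binom{N-r-1}{\ell-1}=[y^{N-\ell}]\,y^r(1-y)^{-\ell}$, which collapses the $r$-sum to a geometric series and reduces everything to one rational integral in $u$; this avoids both the $b$-fold differentiation and the extra \cite{SWZ} identity, and lands directly on the ``$N-\ell$ moderate'' form of the answer. What you lose is the general formula for $S_{n,a,b}$, which the paper reuses later (for $S_{N-1,1,\ell-1}$ in the $A_2$ computation) and which also produces the companion expression in \eqref{p1Nell=expl} that is efficient for moderate $\ell$.

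On your one delicate point: the way you phrase it (integrate first, obtain a function with a $y^N\log(1/y)$ singularity, then ``discard'' the non-analytic part) is not quite a proof, since $[y^{N-\ell}]$ of a non-analytic function is undefined and the interchange of $\int_0^1 du$ with $[y^{N-\ell}]$ is exactly what is in question. The clean repair is to perform the polynomial division \emph{before} integrating: for each fixed $u\in(0,1]$ write
\[
\frac{u^N}{u+a}=\sum_{j=0}^{N-1}(-a)^j u^{N-1-j}+\frac{(-a)^N}{u+a},\qquad a=\frac{y}{1-y},
\]
and observe that for fixed $u>0$ the remainder term, after restoring the factor $(1-y)^{-\ell-1}$, is $y^N$ times a power series in $y$, hence contributes nothing to $[y^{N-\ell}]$ with $\ell\ge1$. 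The surviving finite sum of monomials can then have its coefficient extracted and be integrated termwise with no analyticity issues, giving $\sum_{j}\frac{(-1)^j}{N-j}\binom{N}{j+\ell}$ exactly as you claim, after which the reindexing $i=N-\ell-j$ finishes the proof.
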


\begin{proof}
Note that for $k=2$ we are able to replace the right-hand side of \eqref{Pr(keverywhere)} with a sum of just $\ell+1$
terms, which will allow us to determine compact formulas for moderate values of $\ell$. 
To do so we will need a certain binomial identity. Introduce
\begin{equation}\label{Snabdef}
S_{n,a,b}=\sum_{r=a+b}^n (-1)^r\frac{\binom{r-a}{b}}{\binom{n}{r}}.
\end{equation}
This function is relevant since \eqref{Pr(keverywhere)} is equivalent to
\begin{equation}\label{PA1=SN-1}
p_{A_1}(N,\ell;2)=
(-1)^{N-1}\binom{N}{\ell}^{-1}S_{N-1,0,\ell-1}.
\end{equation}
As we mentioned earlier 
\begin{equation}\label{SWZ}
S_{n,0,0}=(1+(-1)^n)\,\frac{n+1}{n+2},
\end{equation}
(\cite{Sury}, \cite{Stanley}, \cite{SWZ}), and the key element of the proofs was the identity 
\begin{equation}\label{int}
\binom{n}{r}^{-1} =(n+1)\int_0^1t^r(1-t)^{n-r}\,dt.
\end{equation}
In fact,  in \cite{Sury} the equation \eqref{int} was used to derive a sum-type formula, still with $n+1$ terms, for 
\[
\sum_{r=0}^n (-1)^r\frac{x^r}{\binom{n}{r}},
\]
that yielded \eqref{SWZ} via setting $x=1$. We also use \eqref{int} but avoid an intermediate 
sum with $n+1$ terms, and instead differentiate the resulting integral with respect to the 
parameter $x$. Here are the details. First define and evaluate $\cal S_{n,a,b}(x)$: for $a+b\le n$,
\begin{equation}\label{cals(nax)=}
\begin{aligned}
&\cal S_{n,a,b}(x):=\sum_{r=a+b}^n(-1)^r\frac{x^{r-a}}{\binom{n}{r}}\\
=&
(n+1)\!\int_0^1\!\left(\sum_{r=a+b}^n (-1)^r x^{r-a} t^r(1-t)^{n-r}\!\right) dt\\
=&(n+1)(-1)^{a+b}\int_0^1t^{a+b}(1-t)^{n-a-b}x^b\sum_{r=a+b}^n \left(-\frac{xt}{1-t}\right)^{r-a-b}\, dt\\
=&(n+1)(-1)^{a+b}\int_0^1\frac{x^b+(-1)^{n-a-b}x^{n-a+1}\left(\tfrac{t}{1-t}\right)^{n-a-b+1}}{1+\tfrac{xt}{1-t}}\\
&\times t^{a+b}(1-t)^{n-a-b}\,dt.
\end{aligned}
\end{equation}
The connection between $\cal S(n,a,x)$ and $S(n,a,b)$ is:
$S_{n,a,b}=\tfrac{1}{b!}\left.\frac{d^b \cal S_{n,a,b}(x)}{d x^b}\right|_{x=1}$.

To compute this derivative, we differentiate $b$ times the right-hand side of \eqref{cals(nax)=}
with respect to $x$  by carrying the operation inside the integral and then setting
$x=1$. So
\begin{equation}\label{calSn(b)}
\begin{aligned}
&\quad\quad\quad\left.\frac{d^b\cal S(n,a,x)}{dx^b} \right|_{x=1}\!=(-1)^{a+b}(n+1)\\
&\times \int_0^1\!\!
\frac{\partial^b}{\partial x^b}\,\frac{x^b+(-1)^{n-a-b}x^{n-a+1}\left(\tfrac{t}{1-t}\right)^{n-a-b+1}}{1+\tfrac{xt}{1-t}}
\biggr|_{x=1}\!\! t^{a+b}(1-t)^{n-a-b}\,dt.
\end{aligned}
\end{equation}
By $(uv)^{(b)}=\sum_j\binom{b}{j}u^{(j)}v^{(b-j)}$, the partial derivative at $x=1$ is 
\begin{multline*}
\sum_{j=0}^b\binom{b}{j}\left((b)_j+(-1)^{n-a-b}\left(\tfrac{t}{1-t}\right)^{n-a-b+1}(n-a+1)_j\,
\right)\\
\times (-1)^{b-j}\frac{(b-j)!}{\left(1+\tfrac{t}{1-t}\right)^{b-j+1}}\cdot\left(\frac{t}{1-t}\right)^{b-j}\\
=\sum_{j=0}^b(-1)^{b-j}\binom{b}{j}\biggl[b!\, t^{b-j}(1-t)+(-1)^{n-a-b}(b-j)!\,(n-a+1)_j
\frac{t^{n-a+1-j}}{(1-t)^{n-a-b}}\biggr]\\
=b!\left[(1-t)^{b+1} +\sum_{j=0}^b (-1)^{n-a-j}\binom{n-a+1}{j}\frac{t^{n-a+1-j}}{(1-t)^{n-a-b}}\right].
\end{multline*}
Plugging the last expression into \eqref{calSn(b)} and using \eqref{int} we obtain
\begin{equation}\label{sum(n,a,b)=expl}
\begin{aligned}
S_{n,a,b}=&\sum_{r=a+b}^n (-1)^r\frac{\binom{r-a}{b}}{\binom{n}{r}}
=(n+1)\biggl[\frac{(-1)^{a+b}}{(n+2+b)\binom{n+b+1}{a+b}}\\
&+\sum_{j=0}^b(-1)^{n+b-j}\binom{n-a+1}{j}\frac{1}{n+2+b-j}\biggr].
\end{aligned}
\end{equation}
For  large $n$, this formula is a significant improvement of the initial definition of $S_{n,a,b}$
if $b$ remains moderately valued. Using yet another identity 
\[
\sum_{j=0}^u(-1)^j\binom{u}{j}\frac{1}{v+j+1}=\frac{1}{(u+v+1)\binom{u+v}{v}},
\] from Sury et al. \cite{SWZ},
the equation \eqref{sum(n,a,b)=expl} is easily transformed into
\begin{equation}\label{sum(n,a,b)=expl,2}
S_{n,a,b}=(-1)^{a+b}(n+1)\sum_{i=0}^{n-a-b}(-1)^i\binom{n-a+1}{i}\frac{1}{i+a+b+1}.
\end{equation}
This alternative formula is efficient for the extreme case, when $n - a-b$ is moderately valued
as $n$ grows.

So, applying  the formulas \eqref{sum(n,a,b)=expl}, \eqref{sum(n,a,b)=expl,2} for $n=N-1$, $a=0$ and $b=\ell-1$, we obtain 
from \eqref{PA1=SN-1} that
\begin{multline}\label{p1Nell=expl}
p_{A_1}(N,\ell;2)=(-1)^{N-1}N\binom{N}{\ell}^{-1}\\
\times\biggl[\frac{(-1)^{\ell-1}}{(N+\ell)\binom{N+\ell-1}{\ell-1}}
+\sum_{j=0}^{\ell-1}(-1)^{N+\ell-j}\binom{N}{j}\frac{1}{N+\ell-j}\biggr]\\
=(-1)^{N+\ell}N\binom{N}{\ell}^{-1}\sum_{i=0}^{N-\ell}(-1)^i\binom{N}{i}\frac{1}{i+\ell},
\end{multline}
which was to be proved. 
\end{proof}

Note that the
two expressions in  formula \eqref{p1Nell=expl}, we just obtained for $ p_{A_1}(N,\ell;2)$,  can be efficiently computed for moderate $\ell$ and moderate $N-\ell$, respectively.

\begin{example} 
Using the first expression in \eqref{p1Nell=expl} we obtain
\[
p_{A_1}(N,1;2)=\left\{ \begin{array}{l@{\ }l}
\frac{2}{N+1} \,\,\hbox{ if $N$ is odd},\\
\\ 
0 \,\, \hbox{ if $N$ is even.}
\end{array} \right.
\] 
\end{example}

This is equivalent to the result already mentioned in Section \ref{cycles}, since $p_{A_1}(N,1;2)$
is indeed equal to the probability that $\sigma$ is a maximal cycle.
%


\subsection{Probability that the elements $1,\dots,\ell$ are in the same cycle of  $\sigma$} \label{A=A2}
This time $A=A_2=\{0,\ell\}$, so that
\begin{equation} \label{startforA2}
\sum_{a\in A_2}\binom{r}{a}w^a=1+\binom{r}{\ell}w^{\ell}.
\end{equation}

Our goal in this section is to prove the following theorem and its special case of $k=2$. 
\begin{theorem} \label{formulaforA2} For all integers $\ell \geq 2$, the identity
\begin{align*}
p_{A_2}(N,\ell;k)
=&\frac{1}{N!}\sum_{\la_1=1}^N(-1)^{k(\la^1-1)}\binom{N-1}{\la_1-1}^{-k+1}\sum_{s\text{ admissible}}\chi^{\la^*}(s)\\
=&\frac{1}{\ell}\binom{N}{\ell}^{-1}\sum_{\la_1}(-1)^{(k+1)(\la^1-1)}\binom{N-1}{\la_1-1}^{-k+1}\\
&\times \left\{1_{\{\la_1<N\}}\left[\binom{N-1}{\ell-1}-
\binom{N-\la_1-1}{\ell-1}\right]+1_{\{\la_1=N\}}\binom{N}{\ell}\right\}
\end{align*} holds.
\end{theorem}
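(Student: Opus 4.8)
The plan is to follow exactly the route used for Theorem \ref{formulaforA1}: substitute the generating polynomial \eqref{startforA2} into the master formula \eqref{xila1wellxN}, reduce the triple coefficient extraction $[\xi^{\la_1}w^{\ell}x^N]$ to elementary binomial identities, and then feed the result into \eqref{Psigma(s)}. The presence of $\ell\ge 2$ will be what makes the extraction in $w$ clean.

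First I would split the exponent in \eqref{xila1wellxN}. With $\sum_{a\in A_2}\binom{r}{a}w^a = 1+\binom{r}{\ell}w^{\ell}$, the sum over $r$ separates as
\[
-\sum_{r\ge 1}\frac{x^r(1-\xi^r)}{r}-w^{\ell}\sum_{r\ge 1}\frac{x^r(1-\xi^r)}{r}\binom{r}{\ell}.
\]
The first piece sums to $\log\frac{1-x}{1-\xi x}$, so the exponential becomes $\frac{1-x}{1-\xi x}\exp(-w^{\ell}G)$, where $G:=\sum_{r\ge 1}\frac{x^r(1-\xi^r)}{r}\binom{r}{\ell}$. Since $\ell\ge 2$, the factor $\exp(-w^{\ell}G)$ contributes to $[w^{\ell}]$ only through its linear term, so extracting $[w^{\ell}]$ just replaces this factor by $-G$. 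Using $\frac{1}{r}\binom{r}{\ell}=\frac{1}{\ell}\binom{r-1}{\ell-1}$ and the negative-binomial series $\sum_{r\ge\ell}\binom{r-1}{\ell-1}z^r=(z/(1-z))^{\ell}$, I would then write $G=\frac{1}{\ell}\bigl[(x/(1-x))^{\ell}-(\xi x/(1-\xi x))^{\ell}\bigr]$ in closed form.

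The remaining work is the double extraction $[\xi^{\la_1}x^N]$ of $\frac{\xi}{1-\xi}\cdot\frac{1-x}{1-\xi x}\cdot(-G)$, which I split into two terms $T_1,T_2$ according to the two summands of $G$. For $T_1$ I would first read off $[\xi^{\la_1}]\frac{\xi}{(1-\xi)(1-\xi x)}=\frac{1-x^{\la_1}}{1-x}$ as a finite geometric sum, then extract $[x^N]$ from $x^{\ell}(1-x^{\la_1})/(1-x)^{\ell}$, obtaining $\binom{N-1}{\ell-1}-\binom{N-\la_1-1}{\ell-1}$. For $T_2$ the $\xi$-expansion begins at $\xi^{\ell+1}$, and after the $[x^N]$ extraction I expect the factor $(1-x)$ to annihilate every interior term, leaving only the boundary contribution at $\la_1=N$, equal to $-\binom{N-1}{\ell}$. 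Collecting the two pieces, the range $\la_1<N$ yields $\binom{N-1}{\ell-1}-\binom{N-\la_1-1}{\ell-1}$, while at $\la_1=N$ Pascal's identity $\binom{N-1}{\ell-1}+\binom{N-1}{\ell}=\binom{N}{\ell}$ collapses the two contributions into $\binom{N}{\ell}$, reproducing exactly the braced expression in the statement.

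Finally I would substitute back. Tracking the constant $(-1)^{\la^1}(N-\ell)!\,\ell!$ from \eqref{sumcharadmsss1} together with the factor $-1/\ell$ coming from $G$, one gets $\sum_{s\text{ adm}}\chi^{\la^*}(s)=(-1)^{\la^1+1}(N-\ell)!\,(\ell-1)!\,B$, where $B$ is the braced expression. Inserting this into \eqref{Psigma(s)}, the prefactor becomes $\frac{(N-\ell)!(\ell-1)!}{N!}=\frac{1}{\ell}\binom{N}{\ell}^{-1}$, and the sign combines as $(-1)^{k(\la^1-1)}(-1)^{\la^1+1}=(-1)^{(k+1)(\la^1-1)}$, which is precisely the claimed formula. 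I expect the main obstacle to be the $T_2$ extraction and its boundary bookkeeping: one must verify carefully which range restrictions force the interior terms of $T_2$ to vanish and confirm that the lone surviving $\la_1=N$ term is exactly what merges with $T_1$ under Pascal's rule. The hypothesis $\ell\ge 2$ is what guarantees the clean linearization of the $w$-exponential in the earlier step.
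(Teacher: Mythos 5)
Your proposal is correct and follows essentially the same route as the paper's proof: the same linearization of the $w$-exponential, the same closed form $\frac{1}{\ell}\bigl[\frac{x^{\ell}}{(1-x)^{\ell}}-\frac{(x\xi)^{\ell}}{(1-x\xi)^{\ell}}\bigr]$ for the $w^{\ell}$-coefficient of the exponent, and the same final assembly via \eqref{Psigma(s)}; the only (cosmetic) difference is that you organize the coefficient extraction as two terms with the $\xi$-extraction done first, where the paper splits $\frac{\xi(1-x)}{(1-\xi)(1-x\xi)}$ into partial fractions and tracks four terms $T_1,\dots,T_4$. One trivial remark: the linearization of $\exp(-w^{\ell}G)$ at order $w^{\ell}$ needs only $\ell\ge 1$, not $\ell\ge 2$, but this does not affect anything.
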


\begin{proof} In this case, the computation is more involved than it was for $A_1$. 
Formula \eqref{startforA2} implies 
\begin{equation}\label{p_2(nu,ell)}
Q_{A_2}(\vec{\nu},\ell)=(N-\ell)!\,\ell!\,\, [w^{\ell}]\prod_r\frac{1}{\nu_r!}\left(\tfrac{1+\binom{r}{\ell}w^{\ell}}{r}\right)^{\nu_r}.
\end{equation}
So, using \eqref{chila*(nu)} and $\nu=\sum_r\nu_r$, we conclude that
\begin{multline}\label{sumcharadmsss2}
\sum_{s\text{ admissible}}\chi^{\la^*}(s)=(-1)^{\la^1}(N-\ell)!\,\ell!\\
\times 
[\xi^{\la_1}w^{\ell}]\,\,\frac{\xi}{1-\xi}\,\sum_{\vec\nu:\atop 1\nu_1+2\nu_2+\cdots=N}\prod_r\frac{1}{\nu_r!}\left(-(1-\xi^r)
\frac{1+\binom{r}{\ell}w^{\ell}}{r}\right)^{\nu_r}.
\end{multline}
Since $\sum_r r\nu_r=N$, the identity $\sum_r z^r/r =-\log(1-z)$, ($|z|<1$),  implies
that the second line expression in \eqref{sumcharadmsss2} equals
\begin{multline*}
[\xi^{\la_1}w^{\ell}x^N] \frac{\xi}{1-\xi}\,\sum_{\vec\nu\,\ge \bold 0}\prod_r\frac{(x^r)^{\nu_r}}{\nu_r!}\left(-(1-\xi^r)\frac{1+\binom{r}{\ell}w^{\ell}}{r}\right)^{\nu_r}\\
=[\xi^{\la_1}w^{\ell}x^N] \frac{\xi}{1-\xi}\,\prod_r\sum_{\nu_r\ge 0}\frac{1}{\nu_r!}
\left(-x^r(1-\xi^r)\frac{1+\binom{r}{\ell}w^{\ell}}{r}\right)^{\nu_r}\\
=[\xi^{\la_1}w^{\ell}x^N] \frac{\xi}{1-\xi}\,\prod_r\exp\left(-x^r(1-\xi^r)\frac{1+\binom{r}{\ell}w^{\ell}}{r}\right)\\
=[\xi^{\la_1}w^{\ell}x^N] \frac{\xi}{1-\xi}\,\exp\left(-\sum_{r\ge 1}x^r(1-\xi^r)
\frac{1+\binom{r}{\ell}w^{\ell}}{r}\right).
\end{multline*}
Here, using $\sum_{b\ge a}\binom{b}{a}z^b = \tfrac{z^a}{(1-z)^{a+1}}$,
\begin{multline*} 
\sum_{r\ge 1}x^r(1-\xi^r)\frac{1+\binom{r}{\ell}w^{\ell}}{r}\\
=-\log(1-x)+\log(1-x\xi) +\frac{w^{\ell}}{\ell}\sum_{r\ge 1}\binom{r-1}{\ell-1}\bigl(x^r-(x\xi)^r\bigr)\\
=\log\frac{1-x\xi}{1-x}+\frac{w^{\ell}}{\ell}\left(\frac{x^{\ell}}{(1-x)^{\ell}}-\frac{(x\xi)^{\ell}}{(1-x\xi)^{\ell}}
\right).
\end{multline*}
Therefore
\begin{multline*}
[w^{\ell}]\exp\left(-\sum_{r\ge 1}x^r(1-\xi^r)
\frac{1+\binom{r}{\ell}w^{\ell}}{r}\right)\\
=\frac{1-x}{1-x\xi}\,[w^{\ell}]\exp\left[-\frac{w^{\ell}}{\ell}\left(\frac{x^{\ell}}{(1-x)^{\ell}}-\frac{(x\xi)^{\ell}}{(1-x\xi)^{\ell}}
\right)\right]\\
=\frac{1}{\ell}\frac{1-x}{1-x\xi}\left(\frac{(x\xi)^{\ell}}{(1-x\xi)^{\ell}}-\frac{x^{\ell}}{(1-x)^{\ell}}\right).
\end{multline*}
Therefore the  expression in the second line of  \eqref{sumcharadmsss2} is equal to
\begin{multline*}
\frac{1}{\ell}\,[\xi^{\la_1}x^N]\,\frac{\xi}{1-\xi}\cdot\frac{1-x}{1-x\xi}\left(\frac{(x\xi)^{\ell}}{(1-x\xi)^{\ell}}-\frac{x^{\ell}}{(1-x)^{\ell}}\right)\\
=\frac{1}{\ell}\,[\xi^{\la_1}x^N]\,\left(\frac{1}{1-\xi}-\frac{1}{1-x\xi}\right)
\left(\frac{(x\xi)^{\ell}}{(1-x\xi)^{\ell}}-\frac{x^{\ell}}{(1-x)^{\ell}}\right)\\
=:\frac{1}{\ell}(T_1+T_2+T_3+T_4).
\end{multline*}
\bi
Here
\begin{equation}\label{T1=}
\begin{aligned}
T_1&=[\xi^{\la_1}x^N]\,\frac{1}{1-\xi}\cdot\frac{(x\xi)^{\ell}}{(1-x\xi)^{\ell}}\\
&=[\xi^{\la_1}]\,\frac{\xi^N}{1-\xi}\,[y^N]\frac{y^{\ell}}{(1-y)^{\ell}}
=1_{\{\la_1=N\}}\binom{N-1}{\ell-1};
\end{aligned}
\end{equation}
next
\begin{equation}\label{T2=}
\begin{aligned}
T_2&=-[\xi^{\la_1}x^N]\,\frac{1}{1-\xi}\cdot\frac{x^{\ell}}{(1-x)^{\ell}}\\
&=-[x^{N-\ell}]\,\frac{1}{(1-x)^{\ell}}=-\binom{N-1}{\ell-1};
\end{aligned}
\end{equation}
next
\begin{equation}\label{T3=}
\begin{aligned}
T_3&=-[\xi^{\la_1}x^N]\,\frac{(x\xi)^{\ell}}{(1-x\xi)^{\ell+1}}\\
&=-1_{\{\la_1=N\}}\,[y^{N-\ell}]\,\frac{1}{(1-y)^{\ell+1}}=-1_{\{\la_1=N\}}\binom{N}{\ell};
\end{aligned}
\end{equation}
and finally
\begin{equation}\label{T4=}
\begin{aligned}
T_4&=[\xi^{\la_1}x^N]\,\frac{1}{1-x\xi}\,\frac{x^{\ell}}{(1-x)^{\ell}}\\
&=[x^N]\,\frac{x^{\la_1+\ell}}{(1-x)^{\ell}}=[x^{N-\la_1-\ell}]\,\frac{1}{(1-x)^{\ell}}\\
&=1_{\{\la_1<N\}}\binom{N-\la_1-1}{\ell-1}.
\end{aligned}
\end{equation}
It follows from \eqref{T1=}, \eqref{T2=}, \eqref{T3=} and \eqref{T4=} that 
\begin{multline*}
\frac{1}{\ell}(T_1+T_2+T_3+T_4)\\
=-\frac{1}{\ell}\left\{1_{\{\la_1<N\}}\left[\binom{N-1}{\ell-1}-
\binom{N-\la_1-1}{\ell-1}\right]+1_{\{\la_1=N\}}\binom{N}{\ell}\right\}.
\end{multline*}
So \eqref{sumcharadmsss2} becomes
\begin{multline}\label{sumcharadmexpl}
\sum_{s\text{ admissible}}\chi^{\la^*}(s)=(-1)^{\la^1-1}(N-\ell)!\,\ell!\\
\times 
\frac{1}{\ell}\left\{1_{\{\la_1<N\}}\left[\binom{N-1}{\ell-1}-
\binom{N-\la_1-1}{\ell-1}\right]+1_{\{\la_1=N\}}\binom{N}{\ell}\right\}.
\end{multline}
Combining \eqref{sumcharadmexpl} and \eqref{Psigma(s)} we obtain the statement that was to be 
proved. 
\end{proof}

\begin{corollary} For all integers $\ell \geq 2$, we have
\begin{align} 
p_{A_2}(N,\ell;2)=&\frac{1}{\ell}-\frac{1}{(N+1)_2}\label{P2Nell,final2}\\
&+(-1)^{\ell+1}\binom{N-1}{\ell-1}^{-1}\sum_{i=0}^{N-\ell}(-1)^i\binom{N-1}{i}\frac{1}{i+\ell+1}.
\end{align}
\end{corollary}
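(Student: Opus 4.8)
The plan is to specialize the general formula of Theorem~\ref{formulaforA2} to $k=2$ and then collapse the single sum over $\la_1$ into the three contributions into which it naturally splits. Setting $k=2$ gives $-k+1=-1$ and $(-1)^{(k+1)(\la^1-1)}=(-1)^{\la^1-1}$, and since $\la^1+\la_1=N+1$ one may write $(-1)^{\la^1-1}=(-1)^{N-\la_1}$. Substituting $r=\la_1-1$, so that $\binom{N-1}{\la_1-1}^{-1}=\binom{N-1}{r}^{-1}$ and $(-1)^{N-\la_1}=(-1)^{N-1-r}$, turns the bracketed quantity into three pieces: a term $\mathrm{I}$ carrying $\binom{N-1}{\ell-1}$ summed over $r=0,\dots,N-2$; a term $\mathrm{II}$ carrying $-\binom{N-\la_1-1}{\ell-1}=-\binom{N-r-2}{\ell-1}$ over the same range; and the isolated $\la_1=N$ (i.e.\ $r=N-1$) term $\mathrm{III}=\binom{N}{\ell}$. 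Throughout I will use the elementary simplifications $\frac1\ell\binom{N}{\ell}^{-1}N=\binom{N-1}{\ell-1}^{-1}$ and $\frac1\ell\binom{N}{\ell}^{-1}\binom{N-1}{\ell-1}=\frac1N$, which convert the global prefactor into the $\binom{N-1}{\ell-1}^{-1}$ and $\frac1N$ scalings that appear in the target.

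First I would treat $\mathrm{II}$, since it is the source of the alternating sum in the claimed formula. Reversing the index by $r\mapsto N-1-r$ sends $\binom{N-r-2}{\ell-1}$ to $\binom{r-1}{\ell-1}$ and $(-1)^{N-1-r}$ to $(-1)^{r}$, so after renaming the inner sum becomes exactly $S_{N-1,1,\ell-1}$ in the notation of \eqref{Snabdef}. Applying the ``moderate $N-\ell$'' evaluation \eqref{sum(n,a,b)=expl,2} with $n=N-1$, $a=1$, $b=\ell-1$ gives $S_{N-1,1,\ell-1}=(-1)^{\ell}N\sum_{i=0}^{N-1-\ell}(-1)^i\binom{N-1}{i}(i+\ell+1)^{-1}$. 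Multiplying by the prefactor then yields $(-1)^{\ell+1}\binom{N-1}{\ell-1}^{-1}\sum_{i=0}^{N-1-\ell}(\cdots)$; to reach the claimed range $0\le i\le N-\ell$ I would add and subtract the $i=N-\ell$ term, whose value (using $\binom{N-1}{N-\ell}=\binom{N-1}{\ell-1}$) is precisely $(-1)^{N+1}/(N+1)$. Thus $\mathrm{II}$ supplies the full alternating sum up to $N-\ell$ together with a spurious scalar $(-1)^{N}/(N+1)$ that must cancel later.

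Next I would evaluate $\mathrm{I}$ and $\mathrm{III}$. For $\mathrm{I}$ the inner sum is $\sum_{r=0}^{N-2}(-1)^{N-1-r}\binom{N-1}{r}^{-1}$; pulling out $(-1)^{N-1}$, applying Sury's identity in the form \eqref{SWZ} to evaluate $\sum_{r=0}^{N-1}(-1)^r\binom{N-1}{r}^{-1}=(1+(-1)^{N-1})\frac{N}{N+1}$, and removing the $r=N-1$ term gives a closed parity-dependent value; after the scaling $\frac1\ell\binom{N}{\ell}^{-1}\binom{N-1}{\ell-1}=\frac1N$ this contributes $\frac{1+(-1)^{N-1}}{N+1}-\frac1N$. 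The term $\mathrm{III}$ contributes simply $\frac1\ell$. The final step is the sign bookkeeping: adding the leftover $(-1)^{N}/(N+1)$ from $\mathrm{II}$ to $\frac{1+(-1)^{N-1}}{N+1}$ collapses to $\frac1{N+1}$ (since $(-1)^{N-1}=-(-1)^N$), and combining with $-\frac1N$ gives $\frac1{N+1}-\frac1N=-\frac1{N(N+1)}=-\frac1{(N+1)_2}$. Together with $\frac1\ell$ from $\mathrm{III}$ and the alternating sum from $\mathrm{II}$, this is exactly the asserted identity.

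The main obstacle is precisely this last bookkeeping step: each of the three pieces carries parity-dependent $(-1)^N$ contributions, and the two candidate summation ranges for the alternating sum differ by one term, so one must track the off-by-one shift produced in $\mathrm{II}$ and verify that all the $N$-parity-dependent contributions cancel exactly, leaving the clean, parity-free constants $\frac1\ell$ and $-\frac1{(N+1)_2}$. Everything else is a mechanical application of \eqref{Snabdef}--\eqref{sum(n,a,b)=expl,2} together with \eqref{SWZ}.
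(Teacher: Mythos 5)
Your proposal is correct and follows essentially the same route as the paper: specialize Theorem \ref{formulaforA2} to $k=2$, split off the $\la_1=N$ term (giving $1/\ell$), recognize the remaining sum as a linear combination of $S_{N-1,0,0}$ and $S_{N-1,1,\ell-1}$ from \eqref{Snabdef}, and evaluate via \eqref{SWZ} and \eqref{sum(n,a,b)=expl,2}; your substitution $r=\la_1-1$ followed by index reversal is equivalent to the paper's $r=N-\la_1$. Your careful tracking of the extra $i=N-\ell$ term and the cancellation of the parity-dependent $(-1)^N/(N+1)$ contributions is exactly the bookkeeping the paper performs implicitly in passing from \eqref{P2Nell,final} to \eqref{P2Nell,final2}.
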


\begin{proof} For $k=2$, introducing $r=N-\la_1$, we  have
\begin{equation}\label{P2Nell}
\begin{aligned}
p_{A_2}(N,\ell;2)=&\frac{1}{\ell}+\frac{1}{\ell}\binom{N}{\ell}^{-1}\\
&\times\sum_{r=1}^{N-1}(-1)^{r}\cdot
\binom{N-1}{r}^{-1}\left[\binom{N-1}{\ell-1}-
\binom{r-1}{\ell-1}\right].
\end{aligned}
\end{equation}
By \eqref{Snabdef}, the last sum is the linear combination of $S_{N-1,0,0}-1$ and $S_{N-1,1,\ell-1}$. According to \eqref{sum(n,a,b)=expl} and \eqref{sum(n,a,b)=expl,2}, we have
\begin{align*}
S_{N-1,0,0}=&\bigl[1+(-1)^{N-1}\bigr]\frac{N}{N+1},\\
S_{N-1,1,\ell-1}=&(-1)^{\ell}\biggl[\binom{N+\ell}{\ell}^{-1}+N\sum_{j=0}^{\ell-1}(-1)^{N-j}\binom{N-1}{j}\frac{1}{N+\ell-j}\biggr]\\
=&(-1)^{\ell}N\sum_{i=0}^{N-1-\ell}(-1)^i\binom{N-1}{i}\frac{1}{i+\ell+1}.
\end{align*}
Plugging these expressions into \eqref{P2Nell}, we obtain after simple algebra
\begin{align}
p_{A_2}(N,\ell;2)=&\frac{1}{\ell}+\left[\frac{1+(-1)^{N-1}}{N+1}-\frac{1}{N}\right]\notag\\
&+\frac{(-1)^{\ell+1}}{\ell\binom{N}{\ell}}\left[\binom{N+\ell}{\ell}^{-1}+N\sum_{j=0}^{\ell-1}
(-1)^{N-j}\binom{N-1}{j}\frac{1}{N+\ell-j}\right]\notag\\
=&\frac{1}{\ell}-\frac{1}{(N+1)_2}\label{P2Nell,final}\\
&+\frac{(-1)^{\ell+1}}{\ell\binom{N}{\ell}}\left[\binom{N+\ell}{\ell}^{-1}+N\sum_{j=0}^{\ell-2}
(-1)^{N-j}\binom{N-1}{j}\frac{1}{N+\ell-j}\right]\notag\\
=&\frac{1}{\ell}-\frac{1}{(N+1)_2}\label{P2Nell,final2}\\
&+(-1)^{\ell+1}\binom{N-1}{\ell-1}^{-1}\sum_{i=0}^{N-\ell}(-1)^i\binom{N-1}{i}\frac{1}{i+\ell+1},
\notag
\end{align} as  claimed. 
\end{proof}

The equivalent  formulas \eqref{P2Nell,final} and \eqref{P2Nell,final2} are computationally efficient
for moderate $\ell$ and moderate $N-\ell$ respectively.
In particular, plugging $\ell=2,3$ into \eqref{P2Nell,final} and simplifying, we recover Stanley's results, \cite{Stanley2}.

\section{The probability that $\sigma$ separates the disjoint sets ${\cal S}_1,\dots,{\cal S}_t$} \label{separate}
Let $\ell_j=|{\cal S}_j|$, $1\le j\le t$, $\ell=\sum_j\ell_j$. Introduce $p(N,\vec\ell;k)$, the probability that the permutation $\sigma$ separates the sets ${\cal S}_1,\dots,
{\cal S}_t$, meaning that no cycle of $\sigma$ contains a pair of elements from two distinct sets ${\cal S}_i$ and ${\cal S}_j$. 
 Bernardi et al. \cite{Bernardi} were able to derive a striking formula for $p(N,\vec\ell;2)$:
\begin{equation}\label{bern}
p(N,\vec\ell;2)=\frac{(N-\ell)!\prod_j\ell_j!}{(N+t)(N-1)!}
\left[\frac{(-1)^{N+\ell}\binom{N-1}{t-2}}{\binom{N+\ell}{\ell-t}}+\sum_{j=0}^{\ell-t}
\frac{(-1)^j\binom{\ell-t}{j}\binom{N+j+1}{\ell}}{\binom{N+t+j}{j}}\right],
\end{equation}
which is a sum of $\ell-t+2$ terms. Remarkably, $\prod_j\ell_j!$ aside, the rest of this expression 
does not depend on the individual $\ell_j$. The equation \eqref{bern} is very efficient for 
values of $\ell$, $t$ relatively small compared to $N$.

In this section first we apply our approach to obtain a formula for this probability for a general $k\ge 2$.  Similarly to $p(N,\vec\ell;2)$, it is of a form $\prod_j\ell_j!$ times an expression that
depends on $\ell$, but not on individual $\ell_j$. 

\begin{lemma}\label{generalk} Introduce
\[
K(N,\ell,t;r)=\bigl[\xi^{r-\ell+t}\eta^{N-\ell}\bigr] \left(\frac{1-\xi}{1-\eta}\right)^{t-1}\!\!(1-\xi\eta)^{-\ell-1},
\]
and define $\alpha_k(N,t)=t-1$ if $k$ is odd, and $\alpha_k(N,t)=N+t$ if $k$ is even. Then 
\begin{equation}\label{Pr(separ)}
p(N,\vec\ell;k)
\frac{(-1)^{\alpha_k(N,t)}\prod_j\ell_j!}{(N)_{\ell}}
\sum_{r=\ell-t}^{N-1}
(-1)^{(k+1)r} \binom{N-1}{r}^{-k+1}\!\!K(N,\ell,t;r).
\end{equation} 
\end{lemma}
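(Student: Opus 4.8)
The plan is to follow the template established in the proofs of Theorem \ref{formulaforA1} and Theorem \ref{formulaforA2}, but now tracking $t$ separate subsets instead of the single set $[\ell]$. The key generating-function machinery is already in place: formula \eqref{chila*(nu)} expresses $\chi^{\la^*}(s)$ through the cycle counts, and \eqref{Psigma(s)} assembles these into $P_\sigma(s)$. So the real work is to compute $\sum_{s\text{ admissible}}\chi^{\la^*}(s)$, where now ``admissible'' means that no cycle of $s$ meets two distinct sets ${\cal S}_j$.

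First I would generalize the occupancy count $Q_A(\vec\nu,\ell)$ from Section \ref{A}. For each cycle we must decide which single set ${\cal S}_j$ (if any) it draws its marked elements from, then distribute $a$ elements of that ${\cal S}_j$ and $r-a$ unmarked elements into the cycle. Introducing one formal variable $w_j$ per set to mark elements of ${\cal S}_j$, the per-cycle weight becomes $\sum_{a\ge 0}\binom{r}{a}\sum_j w_j^{\,a}$ plus the empty option, i.e.\ a sum over the $t$ sets of the ``at-least-nothing'' generating factor. Extracting the coefficient $[\prod_j w_j^{\ell_j}]$ enforces $|{\cal S}_j|=\ell_j$, and the factor $(N-\ell)!\prod_j\ell_j!$ accounts for labeling the unmarked elements and the marked elements within each set, exactly as in \eqref{p(nu,ell)}. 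Exponentiating over $r$ via $\sum_r z^r/r=-\log(1-z)$ as in \eqref{xila1wellxN} should collapse the sum over $\vec\nu$ into a closed product; the separation constraint is what makes the per-cycle choice a \emph{sum} over $j$, and this is precisely what produces the $(1-\xi)^{t-1}/(1-\eta)^{t-1}$ and $(1-\xi\eta)^{-\ell-1}$ factors in $K(N,\ell,t;r)$ after the coefficient extraction and the substitution $\eta=x$ relating the cycle-length variable. The fact that only $\ell=\sum_j\ell_j$ and $t$ survive, not the individual $\ell_j$, should fall out because after extracting $[\prod_j w_j^{\ell_j}]$ each set contributes an identical structural factor, leaving $\prod_j\ell_j!$ as the only $\ell_j$-dependence.

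Once $\sum_{s\text{ admissible}}\chi^{\la^*}(s)$ is written as $(-1)^{\la^1-1}(N-\ell)!\prod_j\ell_j!$ times the bracketed coefficient $K(N,\ell,t;r)$ with $r=N-\la_1$, I would substitute into \eqref{Psigma(s)} and use $\la^1+\la_1=N+1$ to convert the sign $(-1)^{k(\la^1-1)}$ together with the leftover $(-1)^{\la^1-1}$ into $(-1)^{(k+1)r}$ up to the global factor $(-1)^{\alpha_k(N,t)}$. The parity bookkeeping is where the two cases of $\alpha_k(N,t)$ arise: when $k$ is odd versus even, the accumulated power of $(-1)^{N}$ differs, giving $t-1$ or $N+t$; this needs to be tracked carefully but is routine. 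Finally $\binom{N-1}{\la_1-1}^{-k+1}=\binom{N-1}{r}^{-k+1}$ and $(N!)^{-1}(N-\ell)!=1/(N)_\ell$ complete the prefactor, yielding \eqref{Pr(separ)}.

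The main obstacle I expect is the coefficient-extraction step that produces the clean form of $K(N,\ell,t;r)$. The exponent on $(1-\xi)$ and $(1-\eta)$ being $t-1$ rather than $t$, and the exponent $-\ell-1$ on $(1-\xi\eta)$, must emerge from carefully combining the empty-cycle contribution (which supplies the bare $\xi/(1-\xi)$ factor inherited from \eqref{chila*(nu)}) with the $t$ marked-set contributions; an off-by-one in how the ``hook'' factor $\xi/(1-\xi)$ merges with the per-set logarithmic sums would throw off both exponents. I would therefore isolate the $w_j=0$ part and the marked part before exponentiating, mirroring the split seen in Theorem \ref{formulaforA2} where $\log\frac{1-x\xi}{1-x}$ separated from the $w^\ell$ term, and verify the two exponents against the $t=1$ specialization, which must reproduce \eqref{Pr(keverywhere)} for $A_1$, and against the known $k=2$ formula \eqref{bern} as an external consistency check.
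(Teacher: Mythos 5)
Your plan coincides with the paper's proof: the authors likewise generalize the occupancy generating function of Section \ref{A} by introducing one marking variable $w_j$ per set (with per-cycle weight $\bigl(-(t-1)y^r+\sum_j(w_j+y)^r\bigr)/r$, which is exactly the empty-cycle correction you flag as the delicate point producing the exponent $t-1$), exponentiate via $\sum_r z^r/r=-\log(1-z)$, extract $[\prod_j w_j^{\ell_j}]$ to see that only $\ell$ and $t$ survive, and then plug into \eqref{Psigma(s)} with $\la_1+\la^1=N+1$ for the sign bookkeeping. The proposal is correct in approach and identifies the right consistency checks; it only stops short of carrying out the coefficient extraction that yields $(-1)^tK(N,\ell,t;\la_1-1)$ explicitly.
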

\noindent The formula \eqref{Pr(separ)} is computationally efficient for $\ell - t$ close to $N$.
\begin{proof}  Let $Q(\vec\nu,\vec\ell)$ denote the total number
 of permutations of $[N]$ with
cycle counts $\vec\nu=(\nu_1,\nu_2,\dots)$ that separate ${\cal S}_1,\dots,{\cal S}_t$.
Each cycle of such a permutation either does not contain any element of $\cup_j {\cal S}_j$, or
contains some of the elements of exactly one set ${\cal S}_j$. Since $\left|[N]\cup_j{\cal S}_j\right|=N-\ell$, denoting $\prod_jw_j^{\ell_j}=\vec w\,^{\vec\ell}$,
analogously to \eqref{p(nu,ell)} we have
\begin{equation}\label{QNboldell}
\begin{aligned}
\frac{Q(\vec\nu,\vec\ell)}
{(N-\ell)!\,\prod_j\ell_j!}=&\,[y^{N-\ell}\vec w\,^{\vec\ell}]\left[\prod_r\frac{1}{\nu_r!}\left(\frac{y^r+\sum_{j=1}^t\sum_{a>0}\binom{r}{a}w_j^ay^{r-a}}{r}\right)^{\nu_r}\right]\\
=&\,[y^{N-\ell}\vec w\,^{\vec\ell}]\left[\prod_r\frac{1}{\nu_r!}\left(\frac{-(t-1)y^r+
\sum_{j=1}^t(w_j+y)^r}{r}\right)^{\nu_r}\right].
\end{aligned}
\end{equation}
Using \eqref{chila*(nu)} and
\eqref{QNboldell}, we obtain
\begin{multline}\label{sumcharadmsss}
\sum_{s\text{ admissible}}\chi^{\la^*}(s)=(-1)^{\la^1}(N-\ell)!\, \prod_j \ell_j!\\
\times 
[\xi^{\la_1}y^{N-\ell}\vec w\,^{\vec\ell}]\,\,\frac{\xi}{1-\xi}\,\sum_{\vec\nu}\prod_r\frac{1}{\nu_r!}\left(-\frac{(1-\xi^r)\bigl(-(t-1)y^r+\sum_j(w_j+y)^r\bigr)}{r}\right)^{\nu_r},
\end{multline}
the sum being for $\vec\nu\,\ge \bold 0$ with $\sum_r r\nu_r=N$.  So the  expression in the second line of
 \eqref{sumcharadmsss} equals 
\begin{align*}
&[\xi^{\la_1}x^N y^{N-\ell}\vec w\,^{\vec\ell}] \frac{\xi}{1-\xi}\,\sum_{\vec\nu\,\ge \bold 0}\prod_r\frac{(x^r)^{\nu_r}}{\nu_r!}\\
&\times\left(-\frac{(1-\xi^r)\bigl[-(t-1)y^r+\sum_j(w_j+y)^r\bigr]}{r}\right)^{\nu_r}\\
=&\,[\xi^{\la_1}x^Ny^{N-\ell}\vec w\,^{\vec\ell}] \frac{\xi}{1-\xi}\,\prod_r\exp\left(-\frac{x^r(1-\xi^r)
\bigl[-(t-1)y^r+\sum_j(w_j+y)^r\bigr]}{r}\right)\\
=&\,[\xi^{\la_1}x^Ny^{N-\ell}] \frac{\xi}{1-\xi}\,\left(\frac{1-\xi xy}{1-xy}\right)^{t-1}
[\vec w\,^{\vec\ell}]\prod_j\frac{1-x(w_j+y)}{1-\xi x(w_j+y)}\\
=&\,[\xi^{\la_1}x^Ny^{N-\ell}] \frac{\xi}{1-\xi}\,\left(\frac{1-\xi xy}{1-xy}\right)^{t-1}\left(\frac{1-xy}{1-\xi xy}\right)^{t}
\cdot\prod_j [w_j^{\ell_j}]\frac{1-\tfrac{xw_j}{1-xy}}{1-\tfrac{\xi x w_j}{1-\xi xy}}\\
=&\,[\xi^{\la_1}x^Ny^{N-\ell}] \frac{\xi}{1-\xi}\,\frac{1-xy}{1-\xi xy}
\prod_j\left[\left(\frac{\xi x}{1-\xi xy}\right)^{\ell_j}-\frac{x}{1-xy}\left(\frac{\xi x}{1-\xi xy}\right)^{\ell_j-1}\right]\\
=&\,[\xi^{\la_1}x^Ny^{N-\ell}] \frac{\xi}{1-\xi}\,\frac{1-xy}{1-\xi xy}\left(\frac{\xi x}{1-\xi xy}\right)
^{\ell-t}\left(\frac{(\xi-1)x}{(1-\xi xy)(1-xy)}\right)^t\\
=&(-1)^{t}\,[\xi^{\la_1-1-(\ell-t)}x^{N-\ell}y^{N-\ell}](1-\xi)^{t-1}(1-xy)^{-t+1}(1-\xi xy)^{-\ell-1}\\
=:&\,(-1)^{t} K(N,\ell,t;\la_1-1).
\end{align*}
Thus, $\xi$ aside, we need to extract a coefficient of $(xy)^{N-\ell}$ from a power series of $xy$.  
So
\begin{equation}\label{K=}
\begin{aligned}
K(N,&\ell,t;r):=[\xi^{r-\ell+t}z^{N-\ell}]\,(1-\xi)^{t-1}(1-z)^{-t+1}(1-\xi z)^{-\ell-1}\\
=&\sum_j(-1)^{r-\Delta-j}\binom{\ell+j}{j}\binom{t-1}{r-\Delta-j}\binom{N-\Delta-j-2}{t-2},
\end{aligned}
\end{equation}
where we set $\Delta=\ell-t$. Obviously $K(N,\ell,t;r)=0$ for $r<\ell-t$, and less obviously for $r\ge N$. Indeed
\begin{multline}\label{r>N}
[z^{N-\ell}]\,(1-\xi)^{t-1}(1-z)^{-t+1}(1-\xi z)^{-\ell+1}\\
=\sum_{j\le N-\ell}(-1)^{N-\ell-j}\binom{-t+1}{N-\ell-j}\,[z^j](1-\xi)^{t-1}(1-\xi z)^{-\ell-1},
\end{multline}
and the $[z^j]$-factor is a polynomial of $\xi$ of degree $t-1+j\le t-1+N-\ell< r-\ell+t$ if $r\ge N$.

Combining this with  equation \eqref{Psigma(s)}, and $\la_1+\la^{1}=N+1$, we obtain the statement that was
to be proved.
\end{proof}

The sum in \eqref{Pr(separ)} depends only on $\ell$ and $t$, rather than the individual $\ell_1,\dots,\ell_t$, and 
$K(N,\ell,t,r)$ is given by each of two lines in \eqref{K=}. In particular,
\[
K(N,N,t;r)=[\xi^{r-N+t}](1-\xi)^{t-1}=(-1)^{r-N+t}\binom{t-1}{r-N+t}.
\]
Let $\ell=\sum_j\ell_j=N$.  Introducing $\beta_k(N)=N-1$ for $k$ odd, $\beta_k(N)=0$ for $k$ even,  equation  \eqref{Pr(separ)} becomes
\[
p(N,\vec\ell;k)=
\frac{(-1)^{\beta_k(N)}\prod_j\ell_j!}{(N)_{\ell}}
\sum_{r=N-t}^{N-1}
(-1)^{kr} \binom{N-1}{r}^{-k+1}\binom{t-1}{r-N+t},
\]
an alternating sum of $t$ terms. For $t=N$, $p(N,\vec\ell;k)=\pr(\sigma=\text{id})$; the resulting
formula agrees with \eqref{Pr(id)=}, since for $k$ odd and $N$ even the sum over $r\in [0,N-1]$
is zero.\\

\subsection{When $k=2$}
From now on we focus on $k=2$, and general $\vec\ell$. We begin with a relatively compact
formula that represents $p(N,\vec\ell;2)$ as a composition of integration operation and coefficient extraction operation.

\begin{theorem} The identity
\begin{equation}\label{Psep,k=2,equation}
\begin{aligned}
p(N,\vec\ell;2)=&\frac{(-1)^{N+\ell}N\prod_j\ell_j!}{(N)_{\ell}}\\
&\times [z^{N-\ell}] (1-z)^{-t+1}
\int_0^1\frac{(1-u)^{N+1}u^{\ell-t}}{(1-u+zu)^{\ell+1}}\,du.
\end{aligned}
\end{equation}
holds. 
\end{theorem}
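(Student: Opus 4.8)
The plan is to start from the general-$k$ formula \eqref{Pr(separ)} specialized to $k=2$ and then apply the integral representation \eqref{recbin=} to the inverse binomial coefficient, exactly as was done in the proof of Corollary \ref{zagier3} and in Theorem \ref{gensig2}. For $k=2$ we have $\alpha_2(N,t)=N+t$ and the exponent $-k+1=-1$, so \eqref{Pr(separ)} reads
\[
p(N,\vec\ell;2)=\frac{(-1)^{N+t}\prod_j\ell_j!}{(N)_{\ell}}\sum_{r=\ell-t}^{N-1}(-1)^{r}\binom{N-1}{r}^{-1}K(N,\ell,t;r).
\]
First I would use \eqref{recbin=} with $n=N-1$ to write $\binom{N-1}{r}^{-1}=N\int_0^1 u^{r}(1-u)^{N-1-r}\,du$, and carry the integral outside the sum over $r$. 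This turns the $r$-sum into a geometric-type sum in the variable $u$ weighted by $K(N,\ell,t;r)$.

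The key technical step is to reassemble the sum over $r$ back into a coefficient extraction in the auxiliary variable that generated $K$. Recall from the proof of Lemma \ref{generalk} that
\[
K(N,\ell,t;r)=[\xi^{r-\ell+t}z^{N-\ell}](1-\xi)^{t-1}(1-z)^{-t+1}(1-\xi z)^{-\ell-1}.
\]
So I would pull out the fixed factor $[z^{N-\ell}](1-z)^{-t+1}$, and interpret the $\xi$-coefficient as recording the value of $r$. After substituting the integral representation, the sum
\[
\sum_{r=\ell-t}^{N-1}(-1)^{r}u^{r}(1-u)^{N-1-r}\,[\xi^{r-\ell+t}](\cdots)
\]
becomes, upon shifting $r\mapsto r-\ell+t$ and collecting powers, a sum that resums into evaluating the generating function $(1-\xi)^{t-1}(1-\xi z)^{-\ell-1}$ at $\xi=-u/(1-u)$ (up to overall powers of $u$ and $(1-u)$ coming from the prefactors). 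Concretely, substituting $\xi\to -\tfrac{u}{1-u}$ gives $(1-\xi)^{t-1}=(1-u)^{-t+1}$ and $(1-\xi z)^{-\ell-1}=\bigl(1+\tfrac{uz}{1-u}\bigr)^{-\ell-1}=(1-u)^{\ell+1}(1-u+uz)^{-\ell-1}$, which is exactly the integrand appearing in \eqref{Psep,k=2,equation} after accounting for the powers of $u^{\ell-t}$ and $(1-u)^{N+1}$.

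The main obstacle I expect is bookkeeping the boundary of the summation: the integral representation \eqref{recbin=} is valid for each fixed $r$, but to resum the geometric series cleanly one wants the sum to run over all $r\ge \ell-t$ rather than stopping at $r=N-1$. The argument that the extra terms $r\ge N$ contribute nothing is precisely the degree bound established at the end of the proof of Lemma \ref{generalk} (equation \eqref{r>N}): the $[z^{N-\ell}]$-coefficient is a polynomial in $\xi$ of degree strictly less than $r-\ell+t$ once $r\ge N$, so $[\xi^{r-\ell+t}]$ of it vanishes. Hence extending the sum to infinity is harmless, the geometric series converges for $u$ small, and the resulting closed form extends to $u\in[0,1]$ by analyticity. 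Once the resummation and the substitution $\xi=-u/(1-u)$ are carried out, collecting the sign $(-1)^{N+t}$ together with the factors produced by the substitution yields the claimed $(-1)^{N+\ell}$ and the displayed integrand, completing the proof.
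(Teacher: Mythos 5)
Your proposal is correct and follows essentially the same route as the paper: specialize \eqref{Pr(separ)} to $k=2$, extend the $r$-sum past $N-1$ using the vanishing of $K(N,\ell,t;r)$ for $r\ge N$, apply the Beta-integral representation of $\binom{N-1}{r}^{-1}$, and resum the $\xi$-coefficients into an evaluation at $\xi=-u/(1-u)$ (the paper phrases this as rescaling $\xi$ and using $\sum_{r\ge 0}[\xi^r]f(\xi)=f(1)$, which is the same computation). The power and sign bookkeeping you describe checks out, so no gap.
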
 

\begin{proof}
As $k=2$, equation \eqref{Pr(separ)}
becomes
\begin{align}
p(N,\vec\ell;2)
=&\frac{(-1)^{N+t}\prod_j\ell_j!}{(N)_{\ell}}\sum_{r=\ell-t}^{N-1}(-1)^{r} \binom{N-1}{r}^{-1}\!\!K(N,\ell,t;r),
\label{Pr(separ),k=2}\\
K(N,\ell,t;r):=&\,[\xi^{r-\ell+t}z^{N-\ell}]\,(1-\xi)^{t-1}(1-z)^{-t+1}(1-\xi z)^{-\ell-1}.\notag
\end{align}
In \eqref{Pr(separ),k=2} we can extend the summation to $r\in [\ell-t,\infty)$, since
$K(N,\ell,t;r)=0$ for $r\ge N$.

Let us evaluate the sum in \eqref{Pr(separ),k=2} halfway, i.e. dropping $(1-z)^{-t+1}$ and postponing the extraction of the
coefficient by $z^{N-\ell}$ till the next step. Using \eqref{int}, {\it and\/}  the observation above  to replace $N-1$ with $\infty$, we reduce the halfway sum 
 to
\begin{multline}\label{sumreduced}
N\sum_{r=\ell-t}^{\infty}(-1)^r [\xi^{r-\ell+t}]\,\frac{(1-\xi)^{t-1}}{(1-\xi z)^{\ell+1}}
\int_0^1u^r(1-u)^{N-1-r}\,du\\
=N\int_0^1(1-u)^{N-1}\left(\sum_{r=\ell-t}^{\infty}\left(-\frac{u}{1-u}\right)^r\,[\xi^{r-\ell+t}]
\frac{(1-\xi)^{t-1}}{(1-\xi z)^{\ell+1}}
\right)\,du\\
=N\int_0^1(1-u)^{N-1}\left(-\frac{u}{1-u}\right)^{\ell-t}\left(\sum_{r=\ell-t}^{\infty}[\xi^{r-\ell+t}]
\frac{(1+\xi\tfrac{u}{1-u})^{t-1}}{(1+\xi z\tfrac{u}{1-u})^{\ell+1}}\right)\,du\\
=N\int_0^1(1-u)^{N-1}\left(-\frac{u}{1-u}\right)^{\ell-t}\left(\sum_{r=\ell-t}^{\infty}[\xi^{r-\ell+t}]
\frac{(1+\xi\tfrac{u}{1-u})^{t-1}}{(1+\xi z\tfrac{u}{1-u})^{\ell+1}}\right)\,du\\
=N\int_0^1(1-u)^{N-1}\left(-\frac{u}{1-u}\right)^{\ell-t}\left.\frac{(1+\xi\tfrac{u}{1-u})^{t-1}}{(1+\xi z\tfrac{u}{1-u})^{\ell+1}}\right|_{\xi=1}\,du\\
=(-1)^{\ell-t}N\int_0^1\frac{(1-u)^{N+1}u^{\ell-t}}{(1-u+zu)^{\ell+1}}\,du;
\end{multline}
(in the fifth line we used $\sum_{r\ge 0} [\xi^r] f(\xi) =f(1)$ for the series $f(\xi)=\sum_{r\ge 0} a_r\xi^r$).
So \eqref{Pr(separ),k=2} is transformed into
\begin{equation}\label{Psep,k=2,transform}
\begin{aligned}
p(N,\vec\ell;2)=&\frac{(-1)^{N+\ell}N\prod_j\ell_j!}{(N)_{\ell}}\\
&\times [z^{N-\ell}] (1-z)^{-t+1}
\int_0^1\frac{(1-u)^{N+1}u^{\ell-t}}{(1-u+zu)^{\ell+1}}\,du ,
\end{aligned}
\end{equation} which is the formula that was to be proved. 
\end{proof}

\begin{corollary} For $\ell=N$ the formula \eqref{Psep,k=2,transform} yields
\begin{equation}\label{Psep,k=2,ell=N}
p(N,\vec\ell;2)=\frac{N\prod_j\ell_j!}{N!}\,\int_0^1u^{N-t}\,du=
\frac{\prod_j\ell_j!}{(N-1)!(N-t+1)}.
\end{equation}
\end{corollary}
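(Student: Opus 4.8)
The plan is to specialize the integral representation \eqref{Psep,k=2,transform} to the case $\ell = N$ and simplify, since in this regime the coefficient-extraction operator collapses to evaluation at a single point, and essentially no computation remains. First I would note that when $\ell = N$ the prefactor simplifies at once: the sign is $(-1)^{N+\ell} = (-1)^{2N} = 1$ and the falling factorial is $(N)_\ell = (N)_N = N!$, so that the scalar factor in \eqref{Psep,k=2,transform} becomes $\frac{(-1)^{N+\ell} N \prod_j \ell_j!}{(N)_\ell} = \frac{N \prod_j \ell_j!}{N!}$.

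Next, the operator $[z^{N-\ell}]$ becomes $[z^0]$, i.e.\ extraction of the constant term of the power series in $z$. Since both $(1-z)^{-t+1}$ and the $u$-integral define functions analytic in $z$ at the origin, this constant term is obtained simply by setting $z = 0$. The factor $(1-z)^{-t+1}$ then contributes only its own constant term $1$, while in the integrand the denominator reduces to $(1-u+zu)^{\ell+1}\big|_{z=0} = (1-u)^{N+1}$.

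The key simplification is that at $z=0$ this $(1-u)^{N+1}$ cancels exactly against the factor $(1-u)^{N+1}$ in the numerator, and since $\ell - t = N - t$ the integrand collapses to $u^{N-t}$. This already yields the first displayed equality, $p(N,\vec\ell;2) = \frac{N \prod_j \ell_j!}{N!}\int_0^1 u^{N-t}\,du$. Evaluating the elementary integral $\int_0^1 u^{N-t}\,du = 1/(N-t+1)$ and combining with the prefactor (using $N/N! = 1/(N-1)!$) gives
\[
p(N,\vec\ell;2) = \frac{N \prod_j \ell_j!}{N!} \cdot \frac{1}{N-t+1} = \frac{\prod_j \ell_j!}{(N-1)!(N-t+1)},
\]
which is the asserted formula.

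The only point requiring care, and hence the main (minor) obstacle, is the legitimacy of replacing $[z^0]$ by evaluation at $z=0$ inside the integral; this is a routine analyticity argument, but it should be noted that the pointwise value of the integrand at $z=0$ is integrable precisely because the potentially singular factor $(1-u)^{N+1}$ cancels, so no difficulty arises near $u=1$ and the interchange of constant-term extraction with $u$-integration is justified.
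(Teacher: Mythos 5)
Your proof is correct and is exactly the (implicit) argument the paper intends: substitute $\ell=N$ into \eqref{Psep,k=2,transform}, note that $[z^{0}]$ reduces to taking the constant term (equivalently setting $z=0$), observe the cancellation of $(1-u)^{N+1}$, and evaluate $\int_0^1 u^{N-t}\,du$. The paper states the corollary without a written proof, and your computation supplies precisely that routine verification, including the correct handling of the prefactor $(-1)^{2N}N/(N)_N = N/N!$.
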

\noindent To compare, the separation probability for the uniformly random permutation of $[N]$ is 
$\prod_j\ell_j!/N!$.

For $\ell_1=\cdots=\ell_{t-1}=1$, $\ell_t=N-t+1$, ($2\le t\le N$),  $p(N,\vec\ell;2)$ is the probability that all elements of a given subset of cardinality $t-1$ are fixed points of $\sigma^{(2)}$; the number of such subsets is $\binom{N}{t-1}$. Furthermore the probability that all the elements of $[N]$
are fixed, i.e. $\sigma^{(2)}=\text{id}$, is  $\tfrac{1}{(N-1)!}$, see \eqref{P(sigma2isid)}. So using the inclusion-exclusion formula, we obtain:
\[
\pr(\sigma^{(2)}\text{ is a derangement})=N\sum_{\tau=0}^{N-1}\frac{(-1)^{\tau}}{(N-\tau)\tau!}
+\frac{(-1)^N}{(N-1)!}.
\]
For comparison, the probability that the uniformly random permutation of $[N]$ is
a derangement equals  $\sum_{\tau=0}^N(-1)^{\tau}\tfrac{1}{\tau!}$.

More generally,
\begin{equation}\label{Psep,k=2,N-ellsmall}
p(N,\vec\ell;2)=\frac{N\prod_j\ell_j!}{(N)_{\ell}}\sum_{k\le N-\ell}(-1)^k
\frac{\binom{t+k-2}{t-2}\binom{N-k}{\ell}}{(N-t+1)\binom{N-t}{k}},
\end{equation}
an equation computationally efficient for moderate $N-\ell$, but progressively less useful for larger values of $N-\ell$.

\subsection{An alternative formula deduced by the WZ-method}
In this section, we will show that equation \eqref{Psep,k=2,transform} can be transformed so that
extraction of the coefficient of $z^{N-\ell}$ will lead to a sum with $\ell-t+2$ number of terms,
close in appearance to the formula \eqref{bern} by Bernardi et al. 

Clearly it is the outside factor $(1-z)^{-t+1}$ that causes the number of summands in 
\eqref{Psep,k=2,N-ellsmall} grow indefinitely with $N$.  To get rid
of $(1-z)^{-t+1}$, we resort to repeated integration by parts  of the integral, denote it $I(z)$, with each step producing the outside factor $1-z$. However the factor $u^{\ell-t}$ in the integrand of $I(z)$ would have made the integration process unwieldy; so we apply it instead to $K_1(z)$, where
\[
K_{\nu}(z):=\int_0^1\frac{(1-u)^{N+\nu}}{(1-u+zu)^{t+\nu}}\,du,
\]
because 
\begin{equation}\label{I(through)K1}
I(z)=\frac{(-1)^{\ell-t}}{(t+1)^{(\ell-t)}}\,\frac{d^{\ell-t}K_1(z)}{dz^{\ell-t}}.
\end{equation}
One integration by parts leads to 
\begin{align*}
K_1(z)=&\frac{1}{N+2}+\frac{(t+1)(1-z)}{N+2}\int_0^1\frac{(1-u)^{N+2}}{(1-u+zu)^{t+2}}\,du\\
=&\frac{1}{N+2}+\frac{(t+1)(1-z)}{N+2}K_2(z).
\end{align*}
After $\ell-1$ integrations by parts, we get
\begin{equation*}
K_1(z)=\sum_{j=1}^{\ell-1}\frac{(t+1)^{(j-1)}}{(N+2)^{(j)}}(1-z)^{j-1}
+\frac{(t+1)^{(\ell-1)}}{(N+2)^{(\ell-1)}}\,(1-z)^{\ell-1} K_{\ell}(z).
\end{equation*}
So, using \eqref{I(through)K1} and
\[
\frac{d^{\ell-t}\bigl[(1-z)^{\ell-1}K_{\ell}\bigr]}{dz^{\ell-t}}=\sum_{\mu=0}^{\ell-t}
(-1)^{\mu}\binom{\ell-t}{\mu}(\ell-1)_{\mu}(1-z)^{\ell-1-\mu}\frac{d^{\ell-t-\mu} K_{\ell}}{dz^{\ell-t-\mu}},
\]
we obtain
\begin{align*}
&\frac{(1-z)^{-t+1}I(z)}{\tfrac{(-1)^{\ell-t}}{(t+1)^{(\ell-t)}}}
=\,(-1)^{\ell-t}\sum_{j=1}^{\ell-1}\frac{(t+1)^{(j-1)}(j-1)_{\ell-t}}{(N+2)^{(j)}}(1-z)^{j-\ell}\\
&+\frac{(t+1)^{(\ell-1)}}{(N+2)^{(\ell-1)}}\sum_{\mu=0}^{\ell-t}(-1)^{\mu}\binom{\ell-t}{\mu}(\ell-1)_{\mu}(1-z)^{\ell-t-\mu}
\,\frac{d^{\ell-t-\mu} K_{\ell}(z)}{dz^{\ell-t-\mu}}.
\\
\end{align*}
It remains to extract the coefficient of $[z^{N-\ell}]$ in the right-hand side expression. First,
\[
[z^{N-\ell}](1-z)^{j-\ell}=(-1)^{N-\ell}\binom{j-\ell}{N-\ell}.
\]
Next, for every $r\ge 0$,
\begin{align*}
&[z^r]\frac{d^{\ell-t-\mu} K_{\ell}}{dz^{\ell-t-\mu}}= (-1)^{\ell-t-\mu}(t+\ell)^{(\ell-t-\mu)}[z^r]
\int_0^1\frac{(1-u)^{N+\ell}u^{\ell-t-\mu}}{(1-u+zu)^{2\ell-\mu}}\,du\\
=&\,(-1)^{\ell-t-\mu}(t+\ell)^{(\ell-t-\mu)}\binom{-2\ell+\mu}{r} \int_0^1(1-u)^{N-\ell+\mu-r}u^{\ell-t-\mu+r}\,du\\
=&\,(-1)^{\ell-t-\mu}\frac{(t+\ell)^{(\ell-t-\mu)}\binom{-2\ell+\mu}{r}}
{(N-t+1)\binom{N-t}{\ell-t-\mu+r}}.
\end{align*}
So
\begin{equation}\label{k-sum}
\begin{aligned}
&[z^{N-\ell}]\,\left\{(1-z)^{\ell-t-\mu}\,\frac{d^{\ell-t-\mu}\, K_{\ell}}{dz^{\ell-t-\mu}}\right\}\\
=&\sum_{k\le \ell-t-\mu} \left\{[z^k] (1-z)^{\ell-t-\mu}\right\}\,\left\{[z^{N-\ell-k}]\,\frac{d^{\ell-t-\mu}\, K_{\ell}}{dz^{\ell-t-\mu}}\right\}\\
=&\sum_{k\le \ell-t-\mu}\!\! \!(-1)^k\binom{\ell-t-\mu}{k}\!\!
\left.(-1)^{\ell-t-\mu}\frac{(t+\ell)^{(\ell-t-\mu)}\binom{-2\ell+\mu}{r}}
{(N-t+1)\binom{N-t}{\ell-t-\mu+r}}\right|_{r=N-\ell-k}.
\end{aligned}
\end{equation}
Collecting the pieces,
\begin{align*}
&\frac{[z^{N-\ell}](1-z)^{-t+1}I(z)}{\tfrac{(-1)^{\ell-t}}{(t+1)^{(\ell-t)}}}\\
&\quad=(-1)^{N-t}\sum_{j=1}^{\ell-1}\frac{(t+1)^{(j-1)}(j-1)_{\ell-t}}{(N+2)^{(j)}}\binom{j-\ell}{N-\ell}\\
&\quad+(-1)^{\ell-t}\frac{(t+1)^{(\ell-1)}}{(N+2)^{(\ell-1)}}\sum_{\mu=0}^{\ell-t}\binom{\ell-t}{\mu}(\ell-1)_{\mu}(t+\ell)^{(\ell-t-\mu)}\\
&\quad\times\sum_{k\le \ell-t-\mu} (-1)^k\binom{\ell-t-\mu}{k}
\frac{\binom{-2\ell+\mu}{N-\ell-k}}
{(N-t+1)\binom{N-t}{\mu+k}}.
\end{align*}
So, since 
\[
\binom{-a}{b}=(-1)^b\binom{a+b-1}{a-1},\quad\frac{(t+1)^{(\ell-1)}(t+\ell)^{(\ell-t-\mu)}}{(t+1)^{(\ell-t)}}
=\frac{(2\ell-\mu-1)!}{\ell!},
\]
equation \eqref{Psep,k=2,transform} becomes
\begin{equation}\label{pnell2=final?}
\begin{aligned}
p(N\!,&\vec\ell;2)=\frac{N\prod_j\ell_j!}{(N)_{\ell}}\\
&\times
\left[(-1)^{N+\ell}\sum_{j=1}^{\ell-1}\frac{(t+1)^{(j-1)}(j-1)_{\ell-t}}{(t+1)^{(\ell-t)}(N+2)^{(j)}}
\binom{N-j-1}{\ell-j-1}\right.\\
&+\frac{1}{\ell! (N+2)^{(\ell-1)}(N-t+1)}\\
&\times\left.\sum_{\mu=0}^{\ell-t}\binom{\ell-t}{\mu}(\ell-1)_{\mu}
\sum_{\nu=\mu}^{\ell-t} \binom{\ell-t-\mu}{\ell-t-\nu}
\frac{(N+\ell-\nu-1)_{2\ell-\mu-1}}
{\binom{N-t}{\nu}}\right];
\end{aligned}
\end{equation}
$\nu$ in the bottom sum comes from substitution $\nu=k+\mu$ in \eqref{k-sum}. Changing the
 order of summation, the double sum above equals
\begin{multline}\label{doublesum=}
\frac{(\ell-t)!}{(N-\ell)!}\sum_{\nu=0}^{\ell-t}\frac{(N+\ell-\nu-1)!}{(\ell-t-\nu)!}\frac{1}{\binom{N-t}{\nu}}
\sum_{\mu=0}^{\nu}\binom{\ell-1}{\mu}\binom{N-\ell}{\nu-\mu}\\
=\frac{(\ell-t)!}{(N-\ell)!}\sum_{\nu=0}^{\ell-t}\frac{(N+\ell-\nu-1)!}{(\ell-t-\nu)!}\,\frac{\binom{N-1}{\nu}}
{\binom{N-t}{\nu}}.
\end{multline}
Let $\Sigma(N,\ell,t)$ denote the top, ordinary, sum in \eqref{pnell2=final?}.

\begin{lemma} The identity   
\begin{equation}\label{ell-t=1}
\Sigma(N,\ell,t)=\frac{(N-1)_{t-2}\,(\ell-t)!}{(t-2)!(N+t)^{(\ell-t+1)}}.
\end{equation}
holds. 
\end{lemma}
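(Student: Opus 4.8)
The plan is to recognize $\Sigma(N,\ell,t)$ as a terminating, balanced (Saalsch\"utzian) ${}_3F_2$ series evaluated at $1$ and to invoke the Pfaff--Saalsch\"utz summation theorem. First I would note that the summand of $\Sigma$ vanishes outside the range $\ell-t+1\le j\le\ell-1$: the falling factorial $(j-1)_{\ell-t}$ forces $j\ge\ell-t+1$, while $\binom{N-j-1}{\ell-j-1}$ forces $j\le\ell-1$. Thus the sum is finite, with exactly $t-1$ nonzero terms. Substituting $m=\ell-1-j$ so that $m$ runs from $0$ to $t-2$, I would rewrite each of $(t+1)^{(j-1)}$, $(j-1)_{\ell-t}$, $(N+2)^{(j)}$ and $\binom{N-j-1}{\ell-j-1}$ as a ratio of ordinary factorials in $m$, obtaining a summand $T_m$.

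Next I would compute the term ratio of the reindexed summand; a direct calculation gives
\[
\frac{T_{m+1}}{T_m}=\frac{(m+2-t)(m-N-\ell)(m+N-\ell+1)}{(m+2-t-\ell)(m+2-\ell)(m+1)}.
\]
This exhibits $\Sigma(N,\ell,t)=T_0\cdot{}_3F_2\!\left(2-t,\,-N-\ell,\,N-\ell+1;\,2-t-\ell,\,2-\ell;\,1\right)$. Since $2-t$ is a nonpositive integer the series terminates, and the decisive check is the balance condition: the sum of the lower parameters minus that of the upper parameters is $(4-t-2\ell)-(3-t-2\ell)=1$, so the series is indeed Saalsch\"utzian.

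I would then apply Pfaff--Saalsch\"utz with the matching $-n=2-t$ (hence $n=t-2$), $a=-N-\ell$, $b=N-\ell+1$, $c=2-\ell$, after verifying that the remaining lower parameter equals $1+a+b-c-n=2-t-\ell$. The theorem gives the value $\dfrac{(c-a)^{(n)}(c-b)^{(n)}}{(c)^{(n)}(c-a-b)^{(n)}}$ (rising factorials); here $c-a=N+2$, $c-b=1-N$, $c=2-\ell$ and $c-a-b=\ell+1$, so after pulling out the sign $(-1)^{t-2}$ from the two factors with negative base (which cancels between numerator and denominator) the value becomes a ratio of factorials. Multiplying by $T_0$ and cancelling the common factors $(\ell+t-2)!$, $(\ell-2)!$, $(N+1)!$ and $\ell!$ leaves exactly $\dfrac{(N-1)_{t-2}\,(\ell-t)!}{(t-2)!\,(N+t)^{(\ell-t+1)}}$, as claimed.

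The main obstacle is bookkeeping rather than ideas: one must translate carefully between the paper's rising-factorial $(a)^{(b)}$ and falling-factorial $(a)_b$ conventions when forming the term ratio and when reading off the Pfaff--Saalsch\"utz output, and correctly decide which upper parameter plays the role of $-n$. It is also prudent to dispose of the degenerate case $t=2$ separately, where the sum collapses to $T_0$ and the identity is immediate. As an alternative that matches the section's theme, the same identity can be certified by the WZ method: Zeilberger's algorithm yields a first-order recurrence in $N$ satisfied by both sides, and verifying a single initial value then completes the proof without appealing to Pfaff--Saalsch\"utz.
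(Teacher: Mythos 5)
Your proof is correct, and it takes a genuinely different route from the paper. The paper proves this identity by the Wilf--Zeilberger method: it sets up the first-order recurrence $\alpha(t)S(t)=\beta(t)S(t-1)$ with $\alpha(t)=(t-2)(N+t+\Delta)$, $\beta(t)=(N-t+2)(N+t-1)$, guesses the telescoping certificate $G(t,\Delta+u)$ from Maple output for $u=1,2,3$, and verifies it by induction, concluding since $G$ vanishes for $u>t-2$. Your route instead recognizes the sum, after the shift $m=\ell-1-j$, as $T_0\cdot{}_3F_2(2-t,\,-N-\ell,\,N-\ell+1;\,2-t-\ell,\,2-\ell;\,1)$, checks the balance condition, and closes it with Pfaff--Saalsch\"utz; I have verified the term ratio, the parameter matching $1+a+b-c-n=2-t-\ell$, and the final factorial cancellation (note that $(\ell-2)!$ does not cancel outright but combines with $(\ell-2)_{t-2}$ to give $(\ell-t)!$, and $(N+t-1)!/(N+\ell)!=1/(N+t)^{(\ell-t+1)}$), all of which land exactly on the stated right-hand side. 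One small point worth making explicit: the lower parameter $c=2-\ell$ is a nonpositive integer, so you should remark that $(2-\ell)^{(m)}$ first vanishes at $m=\ell-1>t-2$, which keeps the terminating series and the Saalsch\"utz evaluation well defined. What your approach buys is a self-contained classical proof that avoids the computer-guessed certificate and the ``evidence is unmistakable'' induction of the paper; what the paper's approach buys is a purely mechanical derivation that needs no recognition of hypergeometric structure. Your closing remark about certifying the identity by a WZ recurrence in $N$ is essentially the paper's actual method, except that the paper's recurrence runs in $t$ rather than $N$.
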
 

\begin{proof} 
We confirmed this conjecture via the powerful Wilf-Zeilberger algorithm, see Nemes et al. \cite{WZ1},
Wilf and Zeilberger \cite {WZ2}.
  Given $\Delta\ge 0$, introduce a function of $t\ge 2$, defined by
\[
S(t)=\sum_{j=1}^{t-1+\Delta}\frac{(t+1)^{(j-1)}(j-1)_{\Delta}}{(t+1)^{(\Delta)}(N+2)^{(j)}}\binom{N-j-1}
{t+\Delta-j-1}.
\]
The non-zero summands are those for $j\in [\Delta+1, t-1+\Delta]$.  We can extend summation to $j\in [1,\infty)$, since the last binomial is zero for $j\ge t+\Delta$. We need to show that
\begin{equation}\label{S=S*}
S(t)=S^*(t):=\frac{(N-1)_{t-2}\Delta!}{(t-2)!(N+t)^{(\Delta+1)}}.
\end{equation}
To do so, first we compute
\begin{align*}
&\,\quad\frac{S^*(t)}{S^*(t-1)}=\frac{\beta(t)}{\alpha(t)},\\
\alpha(t):=(t-2)&(N+t+\Delta),\quad \beta(t):=(N-t+2)(N+t-1).
\end{align*}
Next, let $F(t,j)$ stand for the $j$-term in the series $S(t)$. Introduce the ``partner'' sequence $G(t,j)$ (which again for each $t$ is $0$ for all but finitely many $j$) such that
\begin{equation}\label{Grecur}
G(t,j)-G(t,j-1)=\alpha(t)F(t,j) -\beta(t)F(t-1,j),\quad j\ge \Delta+1,
\end{equation}
and $G(t,\Delta)=0$. 

The equation \eqref{S=S*} will be proved if we demonstrate that
$G(t,j)=0$ for $j$ large enough. 

Computation by Maple shows that 
\begin{align*}
G(t,\Delta+1)=&-\frac{(\Delta+1)!(\Delta+2t-2)}{(N+2)^{(\Delta+1)}}\binom{N-\Delta-2}{t-3},\\
G(t,\Delta+2)=&-\frac{(\Delta+2)!(\Delta+2t-2)(t+\Delta+1)}{(N+2)^{(\Delta+2)}}\binom{N-\Delta-3}{t-4},\\
G(t,\Delta+3)=&-\frac{(\Delta+3)!(\Delta +2t-2)(t+\Delta+2)_2}{2(N+2)^{(\Delta+3)}}
\binom{N-\Delta-4}{t-5}.
\end{align*}
The evidence is unmistakable: it must be true that for all $u\ge 1$
\begin{equation}\label{G(t,d+u)=}
G(t,\Delta+u)=-\frac{(\Delta+u)!(\Delta+2t-2)\binom{t+\Delta+u-1}{u-1}}{(N+2)^{(\Delta+u)}}
\binom{N-\Delta-u-1}{t-u-2}.
\end{equation}
Sure enough, the inductive step based on the recurrence \eqref{Grecur} is easily carried out with a 
guided assistance of Maple. It remains to notice that the last binomial coefficient is zero for $u>t-2$.
\end{proof}

Now we are in a position to announce the main result of this section. 
\begin{theorem} \label{k=2separate}  The identity
\begin{multline}\label{pnell2=final???}
p(N,\vec\ell;2)=\frac{(N-\ell)!\,\prod_j\ell_j!}{(N-1)!\,(N+t)}
\left[(-1)^{N+\ell}\frac{\binom{N-1}{t-2}}{\binom{N+\ell}{\ell-t}}\right.\\
+\frac{(N+t)(N+1)_{\ell+1}}{(N-t+1)(N+\ell)!\,(\ell)_t}
\left.\sum_{\nu=0}^{\ell-t}\frac{(N+\ell-\nu-1)!(N-1)_{\nu}}{(\ell-t-\nu)!(N-t)_{\nu}}\right].
\end{multline} holds.
\end{theorem}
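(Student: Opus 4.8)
The plan is to treat the final identity as a bookkeeping consolidation of pieces already assembled, rather than as a fresh computation. Equation \eqref{pnell2=final?} already expresses $p(N,\vec\ell;2)$ as the prefactor $\tfrac{N\prod_j\ell_j!}{(N)_{\ell}}$ times a bracket whose two summands are (i) $(-1)^{N+\ell}\Sigma(N,\ell,t)$, the top ordinary sum, and (ii) the normalized double sum. My first move is to replace the double sum by its closed form \eqref{doublesum=} and to replace $\Sigma(N,\ell,t)$ by the value \eqref{ell-t=1} just established by the Wilf--Zeilberger argument. After these two substitutions no sum over $\mu$ remains, and the only surviving summation is the single sum over $\nu$ that appears in the claimed formula \eqref{pnell2=final???}. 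What is left is to reconcile the scalar prefactors on the two terms.

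For the first term I would rewrite $\Sigma(N,\ell,t)=\tfrac{(N-1)_{t-2}\,(\ell-t)!}{(t-2)!\,(N+t)^{(\ell-t+1)}}$ using $(N-1)_{t-2}/(t-2)!=\binom{N-1}{t-2}$ together with the rising-factorial identity $(N+t)^{(\ell-t+1)}=(\ell-t)!\,(N+t)\binom{N+\ell}{\ell-t}$, which turns the first summand into $(-1)^{N+\ell}\tfrac{1}{N+t}\tfrac{\binom{N-1}{t-2}}{\binom{N+\ell}{\ell-t}}$. Multiplying by the overall prefactor, and using $(N)_{\ell}=N!/(N-\ell)!$ so that $\tfrac{N}{(N)_{\ell}}=\tfrac{(N-\ell)!}{(N-1)!}$, reproduces exactly the first bracketed term of \eqref{pnell2=final???}.

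For the second term I would combine the normalization $\tfrac{1}{\ell!\,(N+2)^{(\ell-1)}(N-t+1)}$ from \eqref{pnell2=final?} with the factor $\tfrac{(\ell-t)!}{(N-\ell)!}$ produced in \eqref{doublesum=}, and cancel the common $(N-\ell)!$ against the overall prefactor. Inside the $\nu$-sum I would rewrite $\binom{N-1}{\nu}/\binom{N-t}{\nu}=(N-1)_{\nu}/(N-t)_{\nu}$, matching the summand in \eqref{pnell2=final???} verbatim. The remaining scalar identity to verify is $\tfrac{(\ell-t)!}{\ell!\,(N+2)^{(\ell-1)}}=\tfrac{(N-\ell)!\,(N+1)_{\ell+1}}{(N+\ell)!\,(\ell)_t}$, which follows from $(\ell-t)!/\ell!=1/(\ell)_t$, from $(N+1)_{\ell+1}=(N+1)!/(N-\ell)!$, and from the rising-factorial evaluation $(N+2)^{(\ell-1)}=(N+\ell)!/(N+1)!$. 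Collecting the two reconciled terms gives \eqref{pnell2=final???}.

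I do not expect a genuine obstacle in this final step: the analytic work has already been carried out, with the one delicate input being the closed form \eqref{ell-t=1} for $\Sigma(N,\ell,t)$, whose inductive verification via the companion sequence $G(t,\cdot)$ is the true engine of the proof. The only real risk here is arithmetic, namely keeping the falling factorials $(\cdot)_b$, the rising factorials $(\cdot)^{(b)}$, and the ordinary factorials consistent, and tracking the sign $(-1)^{N+\ell}$ correctly through the conversions. Careful normalization of each prefactor, as sketched above, is what makes the two presentations coincide.
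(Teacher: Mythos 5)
Your proposal is correct and is exactly the paper's own argument: the paper's proof of Theorem \ref{k=2separate} is the one-line statement that combining \eqref{doublesum=} and \eqref{ell-t=1} simplifies \eqref{pnell2=final?} to the claim, and you have simply carried out that prefactor bookkeeping explicitly (and correctly). No further comment is needed.
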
 

 \begin{proof} Combining \eqref{doublesum=}
and \eqref{ell-t=1}, formula
\eqref{pnell2=final?} simplifies  to our claim.
\end{proof}

The outside factor and the first inside term of \eqref{pnell2=final???}  are exactly those in  \eqref{bern} by Bernardi et al.
The second inside term,  a sum of $\ell-t+1$ terms, times  $\tfrac{(N+t)(N+1)_{\ell+1}}{(N-t+1)(N+\ell)!\,(\ell)_t}$, is quite different in appearance from its counterpart 
in \eqref{bern}. For $\ell-t\le 5$, Maple confirms that the rational functions given by the sums are
identical; we did not try to prove equality in general.

\section{Probability that  $\sigma$ blocks the 
elements of $[\ell]$}\label{isolated}

We say that the elements of $[\ell]$ are blocked in a permutation $s$ of $[N]$
if in every cycle of $s$  (1) no two elements of $[\ell]$ are neighbors, and (2) each element from $[\ell]$ has a neighbor from $[N]\setminus [\ell]$. 

Let $p(N,\ell;k)$ denote the probability of the event that $\sigma$ blocks the elements of $[\ell]$. In this final section, 
we are going to prove the following theorem. 

\begin{theorem} \label{twotermformula}
For all positive integers $\ell$ and $k$, the formula 
\begin{equation}\label{p4Nellk=}
p(N,\ell;k)=\frac{\binom{N-\ell}{\ell}}{\binom{N}{\ell}}
+(-1)^{k+1}\frac{\binom{N-\ell-1}{\ell-1}}{(N-1)^{k-1}\binom{N}{\ell}}.
\end{equation}
holds.
\end{theorem}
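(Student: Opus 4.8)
The plan is to run this through exactly the machinery built in Sections \ref{A} and \ref{separate}: enumerate the blocking permutations by cycle type, feed the count into the character formula \eqref{chila*(nu)}, then extract coefficients and sum against \eqref{Psigma(s)}. The only genuinely new input is the local structure of a blocking permutation inside a single cycle.

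First I would pin down that per-cycle structure. Writing $a$ for the number of elements of $[\ell]$ and $b=r-a$ for the number of elements of $[N]\setminus[\ell]$ inside a fixed cycle of length $r$, the blocking condition says precisely that the $a$ special elements occupy $a$ distinct ``gaps'' between consecutive ordinary elements of that cycle: no two are cyclically adjacent, and no special is a fixed point. By the gap method, the number of admissible cyclic orderings of a prescribed set of $a$ specials and $b$ ordinaries is $(b-1)!\binom{b}{a}a!$ (and $(r-1)!$ when $a=0$). Dividing by the multinomial factor $a!\,b!$ exactly as in \eqref{p(nu,ell)}, the per-cycle weight simplifies to $\binom{r-a}{a}/(r-a)$, so the analogue of $Q_A(\vec\nu,\ell)$ is
\[
Q(\vec\nu,\ell)=(N-\ell)!\,\ell!\,[w^\ell]\prod_r\frac{1}{\nu_r!}\,g_r(w)^{\nu_r},\qquad g_r(w):=\sum_{a\ge0}\frac{\binom{r-a}{a}}{r-a}\,w^a.
\]

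Next I would run $g_r(w)$ through the same computation as in \eqref{sumcharadmsss1}--\eqref{xila1wellxN}. Combining with \eqref{chila*(nu)} and introducing the size-marking variable $x$ (so that $\sum_r r\nu_r=N$ is enforced by $[x^N]$), the exponential formula collapses the cycle-type sum to
\[
\sum_{s\text{ blocking}}\chi^{\la^*}(s)=(-1)^{\la^1}(N-\ell)!\,\ell!\,[\xi^{\la_1}w^\ell x^N]\,\frac{\xi}{1-\xi}\exp\Bigl(-\sum_{r\ge1}x^r(1-\xi^r)g_r(w)\Bigr).
\]
The key computation is the closed form $\sum_{r\ge1}x^r g_r(w)=-\log(1-x-wx^2)$, obtained by summing over $b=r-a$ first: $\sum_{b\ge1}\tfrac{x^b}{b}\sum_a\binom{b}{a}(wx)^a=\sum_{b\ge1}\tfrac{(x(1+wx))^b}{b}$. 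This makes the exponential telescope to the rational function $\frac{1-x-wx^2}{1-x\xi-wx^2\xi^2}$. Extracting $[w^\ell]$ gives $\frac{(1-x)x^{2\ell}\xi^{2\ell}}{(1-x\xi)^{\ell+1}}-\frac{x^{2\ell}\xi^{2\ell-2}}{(1-x\xi)^{\ell}}$; multiplying by $\frac{\xi}{1-\xi}$ and taking $[x^N]$ leaves a $\xi$-series whose only surviving terms are $\xi^{N-1},\xi^N,\xi^{N+1}$ over $1-\xi$. The decisive point — and the reason the final formula has only two terms — is that $[\xi^{\la_1}]\frac{\xi^m}{1-\xi}$ is nonzero for $\la_1\le N$ only when $m\in\{N-1,N\}$, so only $\la_1=N$ and $\la_1=N-1$ contribute; a Pascal-rule step collapses the two coefficients at $\la_1=N$ to $-\binom{N-\ell}{\ell}$, while $\la_1=N-1$ yields $-\binom{N-\ell-1}{\ell-1}$. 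Substituting these into \eqref{Psigma(s)} with $\binom{N-1}{N-1}=1$, $\binom{N-1}{N-2}=N-1$ and $\la^1=N+1-\la_1$ produces the two stated terms directly.

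The main obstacle I expect is the combinatorial bookkeeping of the first step combined with recognizing the logarithmic closed form $\sum_r x^r g_r(w)=-\log(1-x-wx^2)$: once that is in hand, everything reduces to the established extraction procedure, and the pleasant surprise is that the $\xi$-extraction automatically annihilates all but two values of $\la_1$, so none of the delicate alternating-sum or Wilf--Zeilberger identities used in Sections \ref{A} and \ref{separate} are needed here.
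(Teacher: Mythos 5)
Your proposal is correct and follows essentially the same route as the paper's own proof: the same gap-method count $(b-1)!\,a!\,\binom{b}{a}$ of admissible cycles, the same logarithmic closed form $\sum_r x^r g_r(w)=-\log\bigl(1-x(1+xw)\bigr)$ collapsing the exponential to $\tfrac{1-x(1+xw)}{1-x\xi(1+x\xi w)}$, and the same observation that the coefficient extraction leaves only $\la_1=N$ and $\la_1=N-1$, yielding the two terms after substitution into \eqref{Psigma(s)}. The only (immaterial) difference is that you keep the factor $\tfrac{1}{1-\xi}$ and argue via the surviving powers of $\xi$ plus a Pascal-rule merge, whereas the paper cancels $(1-\xi)$ algebraically before extracting coefficients.
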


\begin{proof}  Let us
begin again with $Q(\vec\nu,\ell)$, the total number of permutations with cycle counts
$\vec\nu$ such that the elements of $[\ell]$ are blocked.
To evaluate $Q(\vec{\nu},\ell)$, introduce the non-negative integers $a_{r,j}$, $b_{r,j}$ that
stand for the generic numbers of elements from $[\ell]$ and $[N]\setminus [\ell]$  in the $j$-th cycle of
length $r$, $(j\le \nu_r)$. Then 
\begin{align}
 a_{r,j}&+b_{r,j}=r,\label{restr1}\\
 &b_{r,j}>0,\label{restr2} \\
\sum_{r,\,j\le\nu_r}a_{r,j}=&\ell,\quad \sum_{r,\,j\le\nu_r}b_{r,j}=N-\ell.\label{sums}
\end{align}
For $a_{r,j}>0$, the number of admissible cycles with parameters $a_{r,j}$, $b_{r,j}$ is
\begin{equation}\label{admcycles}
c(a_{r,j},b_{r,j}):= (a_{r,j}-1)!\,b_{r,j}!\,\binom{b_{r,j}-1}{a_{r,j}-1}=(b_{r,j}-1)!a_{r,j}!\binom{b_{r,j}}{a_{r,j}}.
\end{equation}
The last expression works for $a_{r,j}=0$ as well.

Indeed $(a_{r,j}-1)!$ is the total number of directed cycles formed by $a_{r,j}$ elements from $[\ell]$; $b_{r,j}!$ is the total number of ways to order, linearly, $b_{r,j}$ elements from
$[N]\setminus \ell$, and $\binom{b_{r,j}-1}{a_{r,j}-1}$ is the total number of ways to break any
such $b_{r,j}$-long sequence into $a_{r,j}$ blocks of positive lengths to be fitted between 
$a_{r,j}$ cyclically arranged elements from $[\ell]$, starting with the smallest element among them
and moving in the cycle's direction, say. 

Therefore
\begin{equation}\label{Q4nu}
\begin{aligned}
Q(\vec{\nu},\ell)&=(N-\ell)!\,\ell!\sum_{\bold a,\,\bold b\text{ meet }\atop
\eqref{restr1}, \eqref{restr2},\eqref{sums}}\prod_{r\ge1}\frac{1}{\nu_r!}\prod_{j\le \nu_r}
\frac{c(a_{r,j},b_{r,j})}{a_{r,j}!\,b_{r,j}!}\\
&=(N-\ell)!\,\ell!\,[w^{\ell}]\prod_{r\ge 1}\frac{1}{\nu_r!}\,\left(\sum_{b>0,\, a+b=r}
\frac{1}{b}\binom{b}{a}w^a\right)^{\nu_r}.
\end{aligned}
\end{equation}
Having found $Q(\vec{\nu},\ell)$, we turn to $p(N,\ell ,k)$, the probability that $\sigma$ blocks the elements of $[\ell]$.
Using \eqref{chila*(nu)}, the equality $\nu=\sum_r \nu_r$, and  and \eqref{Q4nu}, we obtain 
\begin{multline*}
\sum_{s:\, \vec{\nu}(s)=\vec{\nu}}\chi^{\la^*}(s)=(-1)^{\la^1}(N-\ell)!\,\ell!\\
\times 
[\xi^{\la_1}w^{\ell}]\,\,\frac{\xi}{1-\xi}\,\prod_r\frac{1}{\nu_r!}\left[-(1-\xi^r)\left(\sum_{b>0,
\atop a+b=r} 
\frac{1}{b}\binom{b}{a}w^a\right)\right]^{\nu_r}.
\end{multline*}
Call a permutation $s$ of $[N]$ admissible if it blocks the elements of $[\ell]$. The above identity implies
 \begin{multline}\label{sumcharadmsss4}
\sum_{s\text{ admissible}}\chi^{\la^*}(s)=(-1)^{\la^1}(N-\ell)!\,\ell!\\
\times 
[\xi^{\la_1}w^{\ell}]\,\,\frac{\xi}{1-\xi}\, \sum_{\vec{\nu}:\atop 1\nu_1+2\nu_2+\cdots=N}\prod_r\frac{1}{\nu_r!}\left[-(1-\xi^r)
\left(\sum_{b>0,\atop a+b=r}\frac{1}{b}\binom{b}{a}w^a\right)\right]^{\nu_r}.
\end{multline}
The expression in the second line of \eqref{sumcharadmsss4} equals
\begin{eqnarray*}
& &[\xi^{\la_1}w^{\ell}x^N] \frac{\xi}{1-\xi}\,\sum_{\vec\nu\,\ge \bold 0}\prod_r\frac{(x^r)^{\nu_r}}{\nu_r!}\left[-(1-\xi^r)\left(\sum_{b>0,\, a+b=r}\frac{1}{b}\binom{b}{a}w^a\right)\right]^{\nu_r}\notag\\
&=&[\xi^{\la_1}w^{\ell}x^N] \frac{\xi}{1-\xi}\,\prod_r\sum_{\nu_r\ge 0}\frac{1}{\nu_r!}
\left[-x^r(1-\xi^r)\left(\sum_{b>0,\, a+b=r}\frac{1}{b}\binom{b}{a}w^a\right)\right]^{\nu_r}\notag\\
&=&[\xi^{\la_1}w^{\ell}x^N] \frac{\xi}{1-\xi}\,\prod_r\exp\left[-x^r(1-\xi^r)\left(\sum_{b>0,\, a+b=r}\frac{1}{b}\binom{b}{a}w^a\right)\right]\notag\\
&=&[\xi^{\la_1}w^{\ell}x^N]\frac{\xi}{1-\xi}\,\exp\left[-\sum_{r\ge 1}[x^r-(x\xi)^r]\left(\sum_{b>0,\, a+b=r}\frac{1}{b}\binom{b}{a}w^a\right)\right].\label{[xila1wellxN]=}
\end{eqnarray*}
Since 
\begin{align*}
\sum_{r\ge 1}y^r \sum_{b>0,\,a+b=r}\frac{1}{b} \binom{b}{a}w^a=&\sum_{b>0}\frac{y^b}{b}
\sum_a\binom{b}{a}(yw)^a\\
=&\sum_{b>0}\frac{y^b}{b}(1+yw)^b=\sum_{b>0}\frac{[y(1+yw)]^b}{b}\\
=&\log\frac{1}{1-y(1+yw)},
\end{align*}
the bottom part \eqref{[xila1wellxN]=} becomes 
\begin{align*}
&[\xi^{\la_1}w^{\ell}x^N] \frac{\xi}{1-\xi}\exp\left(-\log\frac{1}{1-x(1+xw)}+\log\frac{1}{1-x\xi(1+x\xi w)}
\right)\\
=&[\xi^{\la_1}w^{\ell}x^N] \frac{\xi}{1-\xi}\frac{1-x(1+xw)}{1-x\xi(1+x\xi w)}\\
=&[\xi^{\la_1}x^N] \,\frac{\xi(1-x)}{(1-\xi)(1-x\xi)}\,[w^{\ell}]\frac{1-\tfrac{x^2}{1-x}w}{1-\tfrac{(x\xi)^2}{1-x\xi}w}\\
=&[\xi^{\la_1}x^N]\,\frac{\xi(1-x)}{(1-\xi)(1-x\xi)}\left[\left(\frac{(x\xi)^2}{1-x\xi}\right)^{\ell}
-\frac{x^2}{1-x}\left(\frac{(x\xi)^2}{1-x\xi}\right)^{\ell-1}\right]\\
=&[\xi^{\la_1}x^N]\,\frac{\xi}{1-x\xi}\left(\frac{(x\xi)^2}{1-x\xi)}\right)^{\ell-1}\!\!\!\frac{x^2}{1-x\xi}\,(x\xi-1-\xi)\\
=&-[\xi^{\la_1}x^N]\,\left(\frac{x^{2\ell}\xi^{2\ell-1}}{(1-x\xi)^{\ell}}+\frac{x^{2\ell}\xi^{2\ell}}{(1-x\xi)^{\ell+1}}\right)\\
=&-[\xi^{\la_1-2\ell+1}x^{N-2\ell}](1-x\xi)^{-\ell}-[\xi^{\la_1-2\ell}x^{N-2\ell}](1-x\xi)^{-\ell-1}\\
=&-\binom{N-\ell-1}{\ell-1}1_{\{\la_1=N-1\}}-\binom{N-\ell}{\ell}1_{\{\la_1=N\}}.
\end{align*}
So \eqref{sumcharadmsss4} simplifies, greatly, to 
\begin{multline}\label{sum4}
\sum_{s\text{ admissible}}\chi^{\la^*}(s)=(-1)^{\la^1-1}(N-\ell)!\,\ell!\\
\times 
\left[\binom{N-\ell-1}{\ell-1}1_{\{\la_1=N-1\}}+\binom{N-\ell}{\ell}1_{\{\la_1=N\}}\right].
\end{multline}
The rest is easy.  By \eqref{Psigma(s)},
\begin{equation}
p(N,\ell;k)\\
=\frac{1}{N!}\sum_{\la^*}(-1)^{k(\la^1-1)}\binom{N-1}{\la_1-1}^{-k+1}\sum_{s\text{ admissible}}\chi^{\la^*}(s).
\end{equation}
Combining this with \eqref{sum4} we conclude that
\begin{equation}\label{p4Nellk=}
p(N,\ell;k)=\frac{\binom{N-\ell}{\ell}}{\binom{N}{\ell}}
+(-1)^{k+1}\frac{\binom{N-\ell-1}{\ell-1}}{(N-1)^{k-1}\binom{N}{\ell}}.
\end{equation}
\end{proof}

{\bf Note.\/} The equation  \eqref{p4Nellk=} shows that $\lim_{k\rightarrow \infty} p(N,\ell;k) = {N-\ell \choose \ell} / {N \choose \ell}$,  the probability that the {\em uniformly} random permutation blocks $[\ell]$. \\

\begin{center}  {\bf Acknowledgment}  \end{center}
We are indebted to Frank Garvan, who generously helped us with matters involving Maple.
We thank an anonymous referee for suggestions on improving presentation of the results
and the additional references.

\end{document}